\documentclass[a4paper,12pt,reqno]{amsproc}

\usepackage{amsfonts,amsmath,hyperref,color}
\usepackage{amsthm,amssymb,bbm}
\allowdisplaybreaks

\usepackage[T1]{fontenc}
\usepackage[utf8]{inputenc}
\usepackage[a4paper,margin=1in]{geometry}
\usepackage{enumerate}

\renewcommand{\leq}{\leqslant}
\renewcommand{\geq}{\geqslant}
\renewcommand{\le}{\leqslant}
\renewcommand{\ge}{\geqslant}
\renewcommand{\Re}{\ensuremath{\operatorname{Re}}}
\renewcommand{\Im}{\ensuremath{\operatorname{Im}}}

\newcommand{\esssup}{\operatorname*{ess\,sup}}

\renewcommand{\phi}{\varphi}
\renewcommand{\epsilon}{\varepsilon}

\newcommand{\real}{\mathbb{R}}
\newcommand{\comp}{\mathbb{C}}
\newcommand{\nat}{\mathbb{N}}
\newcommand{\Ss}{\mathbb{S}}

\theoremstyle{plain}
\newtheorem{lemma}{Lemma}[section]
\newtheorem{theorem}[lemma]{Theorem}
\newtheorem{corollary}[lemma]{Corollary}

\theoremstyle{definition}

\newtheorem{remark}[lemma]{Remark}
\newtheorem*{examples}{Examples}
\newtheorem*{ack}{Acknowledgements}

\usepackage{marginnote}
\marginparwidth30pt

\begin{document}
\begin{flushright}\small
\underline{\emph{To appear in: Journal of Spectral Theory }}
\bigskip\bigskip
\end{flushright}
\title[Liouville theorem for sub-exponentially growing solutions]{\bfseries An extension of the Liouville theorem
for Fourier multipliers to sub-exponentially growing solutions}
\dedicatory{\bfseries To Brian Davies on the occasion of his 80th birthday}
\author[D.~Berger]{David Berger}
\author[R.L.~Schilling]{Ren\'e L.\ Schilling}
\address[D.~Berger \& R.L.\ Schilling]{TU Dresden\\ Fakult\"{a}t Mathematik\\ Institut f\"{u}r Mathematische Stochastik\\ 01062 Dresden, Germany}
\email{david.berger2@tu-dresden.de}
\email{rene.schilling@tu-dresden.de}

\author[E.~Shargorodsky]{Eugene Shargorodsky}
\address[E.~Shargorodsky]{King's College London\\ Department of Mathematics\\ Strand, London, WC2R 2LS, UK}
\email{eugene.shargorodsky@kcl.ac.uk}

\author[T.~Sharia]{Teo Sharia}
\address[T.~Sharia]{Royal Holloway University of London\\ Department of Mathematics\\ Egham, Surrey, TW20 0EX, UK}
\email{t.sharia@rhul.ac.uk}

\begin{abstract}
    We study the equation $m(D)f=0$ in a large class of sub-exponentially growing functions.
    Under appropriate restrictions on $m \in C(\real^n)$ we show that every such solution can be analytically continued to a sub-exponentially growing entire function on $\comp^n$ if, and only if,  $m(\xi) \neq 0$ for $\xi \neq 0$.
\end{abstract}

\keywords{Fourier multipliers, Liouville theorem, entire functions, Beurling-Domar condition.}
\subjclass[2020]{Primary: 42B15, 35B53, 35A20; Secondary:  32A15, 35E20, 35S05.}

\maketitle

\section{Introduction}

The classical Liouville theorem for the Laplace operator $\Delta := \sum_{k=1}^n \frac{\partial^2}{\partial x_k^2}$ on $\real^n$ says that every bounded (polynomially bounded) solution of the equation $\Delta f = 0$ is in fact constant (is a polynomial). Recently, similar results have been obtained for solutions of more general equations of the form $m(D)f = 0$, where $m(D) :=  \mathcal{F}^{-1} m(\xi) \mathcal{F}$, and
\begin{gather*}
    \mathcal{F} \phi(\xi)
    = \widehat\phi(\xi)
    = \int_{\real^n} e^{-ix\cdot\xi}\phi(x)\,dx
    \quad\text{and}\quad
    \mathcal{F}^{-1} u(x)
    = (2\pi)^{-n} \int_{\real^n} e^{ix\cdot\xi} u(\xi)\,d\xi
\end{gather*}
are the Fourier and the inverse Fourier transforms, see \cite{Ali-et-al20,BS22,BSS23,GK23}, and the references therein. Namely, it was shown that, under appropriate restrictions on $m \in C(\real^n)$, the implication
\begin{gather*}
    f  \text{\ \ is bounded (polynomially bounded) and\ \ }  m(D)f = 0   \\
    \implies \quad f \text{\ \ is constant (is a polynomial)}
\end{gather*}
holds if, and only if,  $m(\xi) \neq  0$ for $\xi \neq  0$. Much of this research has been motivated by applications to infinitesimal generators of L\'evy processes.

In this paper, we study solutions of $m(D)f = 0$ that can grow faster than  any polynomial.  Of course, one cannot expect such solutions to have a simple structure,  not  even in the case of $\Delta f = 0$ in $\real^2$, see, e.g., \cite[Ch.~I, \S~2]{L64}. We consider sub-exponentially growing solutions whose growth is controlled by a submultiplicative function, cf.\ \eqref{submult}, satisfying the Beurling--Domar condition \eqref{B-D}, and we show that, under appropriate restrictions on $m \in C(\real^n)$, every such solution admits analytic continuation to a sub-exponentially growing entire function on $\comp^n$ if, and only if,  $m(\xi) \neq  0$ for $\xi \neq  0$, see Corollary \ref{th:3.1}. Results of this type have been obtained for solutions of partial differential equations with constant coefficients by A. Kaneko and G.E. \v{S}ilov, see \cite{Kan98,Kan00,S61}, \cite[Ch.~10, Sect.~2, Theorem~2]{F63}, and Section \ref{Rems} below.

Keeping in mind applications to infinitesimal generators of L\'evy processes, we do not assume that $m$ is the Fourier transform of a distribution with compact support, so our setting is different from that in, e.g., \cite{E60}, \cite[Ch.~XVI]{H83_2}.

The paper is organized as follows. In Chapter \ref{Ch2}, we consider submultiplicative functions satisfying the Beurling--Domar condition. For every such function $g$, we introduce an auxiliary function $S_g$, see \eqref{Sg}, \eqref{Jnfty}, which appears in our main estimates. Chapter \ref{Ch3} contains weighted $L^p$ estimates for entire functions on $\comp^n$, which are a key ingredient in the proof of our main results in Chapter \ref{Ch4}. Another key ingredient is the Tauberian theorem \ref{Taub}, which is similar to \cite[Thm.~7]{BSS23} and \cite[Thm.~9.3]{Ru73}. The main difference is that the function $f$ in Theorem \ref{Taub} is not assumed to be polynomially bounded, and hence it might not be a tempered distribution. So, we avoid using the Fourier transform $\widehat{f} = \mathcal{F} f$ and its support (and non-quasianalytic type ultradistributions). Although we are mainly interested in the case $m(\xi) \neq  0$ for $\xi \neq  0$, we also prove a Liouville type result for $m$ with compact zero set $\left\{\xi \in \real^n \mid  m(\xi) = 0\right\}$, see Theorem \ref{compZ}. Finally, we discuss in Section \ref{Rems} A.~Kaneko's Liouville type results for partial differential equations with constant coefficients, cf.~\cite{Kan98,Kan00}, which show that the Beurling--Domar condition is in a sense optimal in our setting.

\section{Submultiplicative functions and the Beurling--Domar condition}\label{Ch2}

Let $g : \real^n\to (0, \infty)$ be a locally bounded, measurable \emph{submultiplicative} function,  i.e.\ a locally bounded measurable function satisfying
\begin{gather*}
    g(x + y) \le C g(x)g(y) \quad\text{for all}\quad x, y \in \real^n,
\end{gather*}
where the constant $C \in [1, \infty)$ does not depend on $x$ and $y$. Without loss of generality, we will always assume that $g\geq 1$, as otherwise we can replace $g$ with $g+1$. Also, replacing $g$ with $Cg$, we can assume that
\begin{equation}\label{submult}
    g(x + y) \le  g(x)g(y) \quad\text{for all}\quad x, y \in \real^n .
\end{equation}
A locally bounded submultiplicative function is exponentially bounded, i.e.
\begin{equation}\label{expbdd}
    |g(x)| \le C e^{a|x|}
\end{equation}
for suitable constants $C, a > 0$, see \cite[Section~25]{sato} or \cite[Ch.~VII]{HP57}.

We will say that $g$ satisfies the \emph{Beurling--Domar} condition if
\begin{equation}\label{B-D}
    \sum_{l = 1}^\infty \frac{\log{g(l x)}}{l^2} < \infty \quad\text{for all}\quad x \in \real^n .
\end{equation}
If $g$ satisfies the Beurling--Domar condition, then it also satisfies the Gelfand--Raikov--Shilov condition
\begin{gather*}
    \lim_{l \to \infty} g(lx)^{1/l} = 1 \quad\text{for all}\quad x \in \real^n ,
\end{gather*}
while $g(x) = e^{|x|/\log(e + |x|)}$ satisfies the latter but not the former condition, see \cite{G07}. It is also easy to see that $g(x) = e^{|x|/\log^\gamma(e + |x|)}$ satisfies the Beurling--Domar condition if, and only if,  $\gamma > 1$. The function
\begin{gather*}
    g(x) = e^{a |x|^b} (1 + |x|)^s (\log(e + |x|))^t
\end{gather*}
satisfies the Beurling--Domar condition for any $a, s, t \ge 0$ and $b \in [0, 1)$, see \cite{G07}.

\begin{lemma}\label{unif}
    Let $g : \real^n\to [1, \infty)$ be a locally bounded, measurable submultiplicative function satisfying the Beurling--Domar condition \eqref{B-D}. Then for every $\varepsilon > 0$, there exists $R_\varepsilon \in (0, \infty)$ such that
    \begin{equation}\label{uest}
        \int_{R_\varepsilon}^\infty \frac{\log{g(\tau x)}}{\tau^2}\, d\tau < \varepsilon
        \quad\text{for all}\quad x \in \Ss^{n - 1} := \left\{y \in \real^n : \ |y| = 1\right\}.
\end{equation}
\end{lemma}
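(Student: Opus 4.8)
The plan is to recast the estimate as a uniform bound on the tails of the series in \eqref{B-D}, and to obtain the uniformity over the compact sphere $\Ss^{n-1}$ in two stages: a uniform \emph{finiteness} of the tails, obtained from the Steinhaus difference‑set theorem, and then a uniform \emph{smallness}, obtained by covering the sphere and using submultiplicativity. Two preliminary reductions: replacing $g$ by $x\mapsto\max\{g(x),g(-x)\}$ (still locally bounded, measurable, submultiplicative, still satisfying \eqref{B-D}, and dominating $g$) we may assume $g(-x)=g(x)$; and, with $M:=\sup_{|y|\le1}g(y)\in[1,\infty)$, the bound $g(\tau x)\le g(lx)\,g((\tau-l)x)\le M\,g(lx)$ for $\tau\in[l,l+1]$ and $x\in\Ss^{n-1}$ gives, after integrating over $[l,l+1]$ and summing over $l\ge N$,
\[
    \int_N^\infty\frac{\log g(\tau x)}{\tau^2}\,d\tau\;\le\;\sum_{l=N}^\infty\frac{\log g(lx)+\log M}{l^2},\qquad N\in\nat,\ x\in\Ss^{n-1}.
\]
For each fixed $x$ the right‑hand side is the tail of a convergent series (by \eqref{B-D}), so the integral is finite and tends to $0$; since it is also non‑increasing in $R$, the lemma amounts to showing that $\Psi(R):=\sup_{x\in\Ss^{n-1}}\int_R^\infty\tau^{-2}\log g(\tau x)\,d\tau\to0$ as $R\to\infty$ (then any $R_\varepsilon$ with $\Psi(R_\varepsilon)<\varepsilon$ works). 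Write $\phi_N(x):=\sum_{l\ge N}l^{-2}\log g(lx)$; condition \eqref{B-D} says exactly that each $\phi_N$ is finite‑valued, and then $\phi_N(x)\downarrow0$ for every $x$.

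\emph{Step 1 (uniform finiteness).} Each $\phi_N$ is measurable and $\bigcup_N\{\phi_N\le1\}=\real^n$, so some $A:=\{\phi_{N_0}\le1\}$ has positive Lebesgue measure; by the Steinhaus theorem $A-A$ contains a ball $B(0,\delta)$ with $\delta>0$. For $z=a-b\in B(0,\delta)$, $a,b\in A$, one has $g(lz)\le g(la)\,g(-lb)=g(la)\,g(lb)$ (using $g(-\cdot)=g(\cdot)$), hence $\phi_{N_0}(z)\le\phi_{N_0}(a)+\phi_{N_0}(b)\le2$; thus $\phi_{N_0}\le2$ on $B(0,\delta)$. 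Now fix $x\in\Ss^{n-1}$, pick $p\in\nat$ with $1/p<\delta$, and set $z:=x/p\in B(0,\delta)$; reindexing by $k=lp$ (so $lx=kz$) gives
\[
    \phi_{N_0}(x)=p^2\sum_{k\in p\nat,\,k\ge pN_0}\frac{\log g(kz)}{k^2}\;\le\;p^2\,\phi_{N_0}(z)\;\le\;2p^2 .
\]
Combined with the first displayed inequality, this yields $\Psi(N_0)\le2p^2+\tfrac{\pi^2}{6}\log M<\infty$.

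\emph{Step 2 (uniform smallness).} Fix $\delta'>0$, a finite $\delta'$‑net $x_1,\dots,x_k\in\Ss^{n-1}$, and $M_1:=\sup_{|y|\le N_0}g(y)<\infty$. Given $x\in\Ss^{n-1}$, choose $j$ with $\beta:=|x-x_j|\le\delta'$ and, when $\beta>0$, put $u:=(x-x_j)/\beta\in\Ss^{n-1}$; then $g(\tau x)\le g(\tau x_j)\,g(\tau\beta u)$, so after the substitution $s=\tau\beta$,
\[
    \int_R^\infty\frac{\log g(\tau x)}{\tau^2}\,d\tau\;\le\;\int_R^\infty\frac{\log g(\tau x_j)}{\tau^2}\,d\tau+\beta\int_{R\beta}^\infty\frac{\log g(su)}{s^2}\,ds ,
\]
the last term being $0$ if $\beta=0$. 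Splitting the last integral at $\max\{R\beta,N_0\}$ and using $\log g(su)\le\log M_1$ on $[R\beta,N_0]$ bounds it by $\delta'\,\Psi(N_0)+(\log M_1)/R$. Taking the supremum over $x\in\Ss^{n-1}$ therefore gives
\[
    \Psi(R)\;\le\;\max_{1\le i\le k}\int_R^\infty\frac{\log g(\tau x_i)}{\tau^2}\,d\tau+\delta'\,\Psi(N_0)+\frac{\log M_1}{R},
\]
and each of the finitely many integrals on the right tends to $0$ as $R\to\infty$, by the first displayed inequality and \eqref{B-D} at $x_i$. Hence $\limsup_{R\to\infty}\Psi(R)\le\delta'\,\Psi(N_0)$; since $\Psi(N_0)<\infty$ by Step 1 and $\delta'>0$ was arbitrary, $\Psi(R)\to0$, as required.

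The main obstacle is Step 1. Because \eqref{B-D} is a pointwise hypothesis and $g$ is only assumed measurable, one cannot invoke Dini's theorem or a Baire‑category argument on $\Ss^{n-1}$ to pass to uniformity; the Steinhaus difference‑set theorem is precisely what upgrades pointwise finiteness on a set of positive measure to uniform finiteness on a whole ball about the origin, after which a scaling transfers the bound to the sphere. Step 2 is then an elementary, if slightly technical, splitting of the integral, the only point to watch being that the constant $\Psi(N_0)$ produced in Step 1 is independent of the net used in Step 2.
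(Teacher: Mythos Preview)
Your proof is correct, but it follows a genuinely different route from the paper's.

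The paper covers $\Ss^{n-1}$ by finitely many rotated open boxes $A_kQ$ (with $Q$ a cube around $\mathbf{e}_0=\frac{1}{\sqrt n}(1,\dots,1)$ and $A_k\in O(n)$), so that every $x\in\Ss^{n-1}$ can be written as $x=\sum_{j=1}^n a_j\,A_k\mathbf{e}_j$ with all coefficients $a_j$ lying in the fixed interval $\bigl(\tfrac{1}{2\sqrt n},\tfrac{2}{\sqrt n}\bigr)$. Submultiplicativity and the substitution $r=a_j\tau$ then reduce the uniform tail estimate directly to the tail estimates along the finitely many fixed directions $A_k\mathbf{e}_j$, $k=1,\dots,K$, $j=1,\dots,n$. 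Because each $a_j$ is bounded \emph{below}, the new lower limit $a_jR_\varepsilon$ stays large, and no separate uniform-finiteness step is required. By contrast, your $\delta'$-net decomposition $x=x_j+\beta u$ has $\beta$ \emph{small}, so after rescaling you must control an integral from $R\beta$ (which may be tiny) to $\infty$ for an arbitrary direction $u\in\Ss^{n-1}$; this is exactly why you need your Step~1 and the Steinhaus theorem to secure $\Psi(N_0)<\infty$. Your argument is more modular and makes the uniform-finiteness issue explicit, at the cost of invoking a deeper measure-theoretic fact and the preliminary symmetrization of $g$; the paper's argument is more elementary and self-contained, but relies on the specific coordinate decomposition with coefficients bounded away from zero.
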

\begin{proof}
Since $g \ge 1$ is locally bounded,
\begin{equation}\label{M}
    0 \le M := \sup_{|y| \le 1} \log{g(y)} < \infty .
\end{equation}
Take any $x \in \Ss^{n - 1}$. It follows from \eqref{submult} that
\begin{gather*}
    \log{g((l + 1) x)} - M
    \le \log{g(\tau x)}
    \le \log{g(l x)} + M \quad\text{for all}\quad  \tau \in [l, l+1] .
\end{gather*}
Hence,
\begin{gather*}
    \sum_{l = L}^\infty \frac{\log{g((l + 1) x)} - M}{(l + 1)^2}
    \le \sum_{l = L}^\infty \int_{l}^{l + 1} \frac{\log{g(\tau x)}}{\tau^2}\, d\tau
    \le \sum_{l = L}^\infty \frac{\log{g(l x)} + M}{l^2},
\end{gather*}
and this implies for all $L\in\nat$ that
\begin{gather}\label{intsum}
    \sum_{l = L + 1}^\infty \frac{\log{g(l x)}}{l^2} - \frac{M}{L}
    \le \int_{L}^\infty \frac{\log{g(\tau x)}}{\tau^2}\, d\tau
    \le \sum_{l = L}^\infty \frac{\log{g(l x)}}{l^2} + \frac{M}{L - 1}.
\end{gather}

Let
\begin{equation}\label{ej}\begin{gathered}
    \mathbf{e}_j := (\underbrace{0,\dots,0}_{j - 1},1,0,\dots, 0) , \ j = 1, \dots, n, \qquad
    \mathbf{e}_0 := \frac 1{\sqrt n} \left(1, \dots, 1\right), \\
    Q := \left\{y = (y_1, \dots, y_n) \in \real^n : \ \frac{1}{2\sqrt{n}} < y_j <  \frac{2}{\sqrt{n}}, \, j = 1, \dots, n\right\} .
\end{gathered}\end{equation}
For every $x \in \Ss^{n - 1}$ there exists an orthogonal matrix $A_x \in O(n)$ such that $x = A_x \mathbf{e}_0$. Hence $\{AQ\}_{A \in O(n)}$ is an open cover of $\Ss^{n - 1}$. Let $\{A_kQ\}_{k = 1, \dots, K}$ be a finite subcover. Take an arbitrary $\varepsilon > 0$. It follows from \eqref{B-D} and \eqref{intsum} that there exists some $R_\varepsilon > 0$ for which
\begin{gather*}
    \int_{\frac{R_\varepsilon}{2\sqrt{n}}}^\infty \frac{\log{g(\tau A_k \mathbf{e}_j)}}{\tau^2}\, d\tau
    < \frac{\varepsilon}{2\sqrt{n}},
    \quad k = 1, \dots, K, \quad j = 1, \dots, n .
\end{gather*}
For any $x \in \Ss^{n - 1}$, there exist $k = 1, \dots, K$ and $a_j \in \left(\frac{1}{2\sqrt{n}}, \frac{2}{\sqrt{n}}\right)$, $j = 1, \dots, n$ such that
\begin{gather*}
    x = \sum_{j = 1}^n a_j A_k \mathbf{e}_j .
\end{gather*}
Using \eqref{submult}, one gets
\begin{align*}
    \int_{R_\varepsilon}^\infty \frac{\log{g(\tau x)}}{\tau^2}\, d\tau
    &\le \sum_{j = 1}^n \int_{R_\varepsilon}^\infty \frac{\log{g(\tau a_j A_k \mathbf{e}_j)}}{\tau^2}\, d\tau\\
    &=  \sum_{j = 1}^n a_j \int_{a_j R_\varepsilon}^\infty \frac{\log{g(r A_k \mathbf{e}_j)}}{r^2}\, dr \\
    &\le \sum_{j = 1}^n \frac{2}{\sqrt{n}} \int_{\frac{R_\varepsilon}{2\sqrt{n}}}^\infty \frac{\log{g(r A_k \mathbf{e}_j)}}{r^2}\, dr\\
    &< \sum_{j = 1}^n \frac{2}{\sqrt{n}}\cdot \frac{\varepsilon}{2\sqrt{n}} = n\, \frac{\varepsilon}{n} = \varepsilon .
\qedhere
\end{align*}
\end{proof}

Let
\begin{align*}
& I_{g, x}(r) := \int_{\max\{r, 1\}}^\infty \frac{\log{g(\tau x)}}{\tau^2}\, d\tau < \infty , \\
& J_{g, x}(r) := \frac{1}{\max\{r, 1\}^2}\int_0^r \log{g(\tau x)}\, d\tau < \infty , \\
& S_{g, x}(r) := \frac{1}{\pi} \int_{-\infty}^\infty \frac{\log{g(\tau x)}}{\tau^2 + \max\{r, 1\}^2}\, d\tau
\quad r \ge 0, \
x \in \Ss^{n - 1} .
\end{align*}
One has, for $r > 1$ and any $\beta \in (0, 1)$,
\begin{align}
    J_{g, x}(r)
    &=  \frac{1}{r^2}\int_0^{r} \log{g(\tau x)}\, d\tau \notag\\
    &= \frac{1}{r^2}\int_0^1 \log{g(\tau x)}\, d\tau
        + \frac{1}{r^{2(1 - \beta)}}\int_1^{r^\beta}\frac{\log{g(\tau x)}}{r^{2\beta}}\, d\tau
        + \int_{r^\beta}^{r} \frac{\log{g(\tau x)}}{r^2}\, d\tau \notag \\
    &\le \frac{M}{r^2} + \frac{1}{r^{2(1 - \beta)}}\int_1^{r^\beta}\frac{\log{g(\tau x)}}{\tau^2}\, d\tau
        + \int_{r^\beta}^{r} \frac{\log{g(\tau x)}}{\tau^2}\, d\tau  \notag \\
    &\le  \frac{M}{r^2} + \frac{I_{g, x}(1)}{r^{2(1 - \beta)}} + I_{g, x}(r^{\beta}),  \label{JIx}
\end{align}
see \eqref{M}. Further, if $r > 1$, then
\begin{align}
    \pi S_{g, x}(r)
    &= \int_{0}^\infty \frac{\log{g(\tau x)}}{\tau^2 + r^2}\, d\tau
        +  \int_{0}^\infty \frac{\log{g(-\tau x)}}{\tau^2 + r^2}\, d\tau \notag \\
    &\le  \int_0^{r} \frac{\log{g(\tau x)}}{r^2}\, d\tau
        + \int_{r}^\infty \frac{\log{g(\tau x)}}{\tau^2}\, d\tau
        +  \int_0^{r} \frac{\log{g(-\tau x)}}{r^2}\, d\tau
        + \int_{r}^\infty \frac{\log{g(-\tau x)}}{\tau^2}\, d\tau \notag \\
    &=   I_{g, x}(r) + J_{g, x}(r) + I_{g, -x}(r) + J_{g, -x}(r), \label{upperx}
\intertext{and, with a similar calculation,}
    \pi S_{g, x}(r)
    & \ge   \int_0^{r} \frac{\log{g(\tau x)}}{2r^2}\, d\tau
            + \int_{r}^\infty \frac{\log{g(\tau x)}}{2\tau^2}\, d\tau
            +  \int_0^{r} \frac{\log{g(-\tau x)}}{2r^2}\, d\tau
            + \int_{r}^\infty \frac{\log{g(-\tau x)}}{2\tau^2}\, d\tau \notag \\
    &=   \frac12\left(I_{g, x}(r) + J_{g, x}(r) + I_{g, -x}(r) + J_{g, -x}(r)\right). \label{lowerx}
\end{align}

Since $g$ is locally bounded, it follows from Lemma \ref{unif} that $I_g$ defined by
\begin{equation}\label{Ig}
    I_g(r) := \sup_{x \in \Ss^{n - 1}} I_{g, x}(r)
    = \sup_{x \in \Ss^{n - 1}}\int_{\max\{r, 1\}}^\infty \frac{\log{g(\tau x)}}{\tau^2}\, d\tau < \infty ,
\end{equation}
is a decreasing function such that
\begin{equation}\label{Infty}
    I_g(r) \to 0 \quad\text{as}\quad r \to \infty .
\end{equation}
Let
\begin{align}
    J_g(r) &:= \sup_{x \in \Ss^{n - 1}} J_{g, x}(r) = \sup_{x \in \Ss^{n - 1}} \frac{1}{\max\{r, 1\}^2}\int_0^r \log{g(\tau x)}\, d\tau , \label{Jg}  \\
    S_g(r) &:= \sup_{x \in \Ss^{n - 1}} S_{g, x}(r) = \sup_{x \in \Ss^{n - 1}} \frac{1}{\pi} \int_{-\infty}^\infty \frac{\log{g(\tau x)}}{\tau^2 + \max\{r, 1\}^2}\, d\tau . \label{Sg}
\end{align}
Then, in view of \eqref{JIx}, \eqref{upperx}, \eqref{lowerx},
\begin{gather*}
    J_{g}(r) \le  \frac{M}{r^2} + \frac{I_{g}(1)}{r^{2(1 - \beta)}} + I_{g}(r^{\beta}) ,
    \\
    \frac1{2\pi}\, \max\left\{I_g(r), J_g(r)\right\} \le S_g(r) \le \frac{2}{\pi}\left(I_g(r) + J_g(r)\right).
\end{gather*}
Thus, $J_g(r) \to 0$, and
\begin{equation}\label{Jnfty}
    S_g(r) \to 0 \quad\text{as}\quad r \to \infty,
\end{equation}
see \eqref{Infty}. It is clear that
\begin{equation}\label{Sgdecr}
    S_g(r) = S_g(1) \text{\ \ for\ \ } r \in [0, 1], \quad\text{and}\quad S_g \text{\ \ is a decreasing function.}
\end{equation}

\begin{examples}
\begin{enumerate}[1)]
\item
If $g(x)  = (1 + |x|)^s$, $s \ge 0$, then we have for all $r\geq 1$
\begin{align}
    S_g(r)
    &= \frac1{\pi}\int_{-\infty}^\infty \frac{s\log{(1 + |\tau|)}}{\tau^2 + r^2}\, d\tau \\
    &= \frac{s}{\pi r} \int_{-\infty}^\infty \frac{\log{(1 + r|\lambda|)}}{\lambda^2 + 1}\, d\lambda \notag \\
    &\le \frac{s}{\pi r} \int_{-\infty}^\infty \frac{\log{(1 + |\lambda|)}}{\lambda^2 + 1}\, d\lambda + \frac{s \log{(1 + r)}}{\pi r} \int_{-\infty}^\infty \frac{1}{\lambda^2 + 1}\, d\lambda \notag \\
    &= \frac{c_1 s}{r} + \frac{s \log{(1 + r)}}{r},  \label{s}
\end{align}
where
\begin{gather*}
    c_1 := \frac{1}{\pi} \int_{-\infty}^\infty \frac{\log{(1 + |\lambda|)}}{\lambda^2 + 1}\, d\lambda < \infty .
\end{gather*}

\item
If $g(x)  = (\log(e + |x|))^t$, $t \ge 0$, then using the obvious inequality
\begin{gather*}
    u + v \le 2uv , \qquad u, v \ge 1,
\end{gather*}
yields for $r\geq 1$
\begin{align}
    S_g(r)
    &= \frac1{\pi}\int_{-\infty}^\infty \frac{t\log\log{(e + |\tau|)}}{\tau^2 + r^2}\, d\tau \notag\\
    &= \frac{t}{\pi r} \int_{-\infty}^\infty \frac{\log\log{(e + r|\lambda|)}}{\lambda^2 + 1}\, d\lambda \notag \\
    &\le \frac{t}{\pi r} \int_{-\infty}^\infty \frac{\log\big(\log{(e + |\lambda|)} + \log{(e + r)}\big)}{\lambda^2 + 1}\, d\lambda \notag \\
    &\le \frac{t}{\pi r} \int_{-\infty}^\infty \frac{\log\big(2\log{(e + |\lambda|)\big)}}{\lambda^2 + 1}\, d\lambda + \frac{t \log\log{(e + r)}}{\pi r} \int_{-\infty}^\infty \frac{1}{\lambda^2 + 1}\, d\lambda \notag \\
    &= \frac{c_2 t}{r} + \frac{t \log\log{(e + r)}}{r}, \label{t}
\end{align}
where
\begin{gather*}
    c_2 := \frac{1}{\pi} \int_{-\infty}^\infty \frac{\log\big(2\log{(e + |\lambda|)\big)}}{\lambda^2 + 1}\, d\lambda < \infty .
\end{gather*}

\item
If $g(x)  = e^{a |x|^b}$, $a \ge 0$, $b \in [0, 1)$,  then we have for all $r\geq 1$
\begin{align}
    S_g(r)
    &= \frac1{\pi}\int_{-\infty}^\infty \frac{a|\tau|^b}{\tau^2 + r^2}\, d\tau \notag\\
    &= \frac{a r^{b - 1}}{\pi} \int_{-\infty}^\infty \frac{|\lambda|^b}{\lambda^2 + 1}\, d\lambda\notag\\
    &= \frac{2a r^{b - 1}}{\pi} \int_{0}^\infty \frac{t^b}{t^2 + 1}\, dt \notag \\
    &= \frac{a r^{b - 1}}{\pi} \int_{0}^\infty \frac{s^\frac{b - 1}{2}}{s + 1}\, ds\notag\\
    &= \frac{a r^{b - 1}}{\sin\left(\frac{1 - b}{2} \pi\right)},  \label{ab}
\end{align}
see, e.g.\ \cite[Ch.~V, Example~2.12]{C78}.

\item
Finally, let $g(x) = e^{|x|/\log^\gamma(e + |x|)}$, $\gamma > 1$. Since
\begin{gather*}
    \frac{\tau(e + \tau)}{\tau^2 + r^2}
    = \frac{1 +\frac{e}{\tau}}{1 + \frac{r^2}{\tau^2}}
    \le 1 +\frac{e}{\tau}
    \le 1 +\frac{e}{r}  \quad\text{for}\quad \tau \ge r ,
\end{gather*}
then for any $\beta \in (0, 1)$  and $r\geq 1$
\begin{align*}
    S_g(r)
    &= \frac1{\pi}\int_{-\infty}^\infty \frac{|\tau|}{(\tau^2 + r^2) \log^\gamma(e + |\tau|)}\, d\tau\\
    &= \frac2{\pi}\int_{0}^\infty \frac{\tau}{(\tau^2 + r^2) \log^\gamma(e + \tau)}\, d\tau \\
    &= \frac2{\pi}  \left(\int_{0}^{r^\beta} + \int_ {r^\beta}^r + \int_{r}^\infty\right)  \frac{\tau}{(\tau^2 + r^2) \log^\gamma(e + \tau)}\, d\tau  \\
    &\le  \frac2{\pi}\int_{0}^{r^\beta}  \frac{\tau}{\tau^2 + r^2}\, d\tau
        + \frac2{\pi \log^\gamma(e + r^\beta)}\int_{r^\beta}^r \frac{\tau}{\tau^2 + r^2}\, d\tau  \\
    &\quad\mbox{} + \frac2{\pi} \left(1 +\frac{e}{r}\right) \int_{r}^\infty \frac{1}{(e + \tau) \log^\gamma(e + \tau)}\, d\tau  \\
    &= \frac1{\pi} \log(\tau^2 + r^2)\big|_0^{r^\beta} + \frac1{\pi \log^\gamma(e + r^\beta)} \log(\tau^2 + r^2)\big|_{r^\beta}^r  \\
    &\quad\mbox{} + \frac2{\pi} \left(1 +\frac{e}{r}\right) \frac{1}{1 - \gamma} \log^{1 - \gamma}(e + \tau)\big|_{r}^\infty    \\
    &\le  \frac1{\pi} \log(1 + r^{2(\beta - 1)}) + \frac{\log 2}{\pi \log^\gamma(e + r^\beta)}
        + \frac2{\pi} \left(1 +\frac{e}{r}\right) \frac{1}{\gamma - 1} \log^{1 - \gamma}(e + r)  \\
    &\le \frac{r^{2(\beta - 1)}}{\pi} + \frac{\log 2}{\pi \log^\gamma(e + r^\beta)}
        + \frac2{\pi} \left(1 +\frac{e}{r}\right) \frac{1}{\gamma - 1} \log^{1 - \gamma}(e + r).
\end{align*}
Since
\begin{gather*}
    \lim_{r \to \infty} \frac{r^{2(\beta - 1)} + (\log{2}) \log^{-\gamma}(e + r^\beta)}{\log^{-\gamma}(e + r)}
    = \frac{\log 2}{\beta^\gamma} \quad\text{for all}\quad \beta \in (0, 1) ,
\end{gather*}
one gets, if we take $\beta \in \left((\log 2)^{1/\gamma}, 1\right)$, the following estimate
\begin{equation}\label{gamma}
    S_g(r)  \le \frac{\log^{-\gamma}(e + r)}{\pi} + \frac2{\pi} \left(1 +\frac{e}{r}\right) \frac{1}{\gamma - 1} \log^{1 - \gamma}(e + r)
\end{equation}
for sufficiently large $r$.
\end{enumerate}
\end{examples}

\section{Estimates for entire functions}\label{Ch3}

Let $1\leq p\leq \infty$ and let $\omega : \real^n\to [0, \infty)$ be a measurable function such that $\omega > 0$ Lebesgue almost everywhere. We set
\begin{equation}\label{Lgp}
    \|f\|_{L^p_{\omega}}:= \|\omega f\|_{L^p}
    \quad\text{and}\quad
    L_\omega^p(\real^n)
    :=\left\{f : \real^n \to \comp \mid f \text{\ measurable,\ } \|f\|_{L^p_{\omega}} < \infty\right\}.
\end{equation}

\begin{lemma}\label{x1}
Let $g : \real^n\to [1, \infty)$ be a locally bounded, measurable submultiplicative function satisfying the Beurling--Domar condition \eqref{B-D}. Let $\varphi$ be a measurable function such that for almost every $x' = (x_2, \dots, x_n) \in \real^{n - 1}$, $\varphi(z_1, x')$ is analytic in $z_1$ for $\Im  z_1 > 0$ and continuous up to $\real$. Suppose also that $\log{|\varphi(z_1, x')|} = O(|z_1|)$ for $|z_1|$ large, $\Im z_1 \ge 0$, and that the restriction of $\varphi$ to $\real^n$ belongs to $L_{g^{\pm 1}}^p(\real^n)$, $1 \le p \le \infty$. Finally, suppose that
\begin{equation}\label{k}
    k_\varphi := \esssup_{x' \in \real^{n - 1}}\left(\limsup_{0 < y_1 \to \infty} \frac{\log{|\varphi(iy_1, x')|}}{y_1}\right) < \infty.
\end{equation}
Then
\begin{equation}\label{est}
    \|\varphi(\cdot + iy_1, \cdot)\|_{L_{g^{\pm 1}}^p(\real^n)} \le C_g e^{\left(k_\varphi  + S_g(y_1)\right) y_1} \|\varphi\|_{L_{g^{\pm 1}}^p(\real^n)}, \quad  y_1 > 0,
\end{equation}
see \eqref{Sg}, \eqref{Jnfty}, where the constant $C_g < \infty$ depends only on $g$.
\end{lemma}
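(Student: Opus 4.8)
\emph{Proof strategy.} The plan is to slice in the tangential variables $x' = (x_2, \dots, x_n)$, to apply a one-dimensional Phragm\'en--Lindel\"of/Poisson majorization in the variable $z_1$ on each slice, and to let the function $S_g$ enter through the submultiplicativity \eqref{submult} of $g$. Fix $y_1 > 0$. For a.e.\ $x' \in \real^{n-1}$ put $\psi := \varphi(\cdot, x')$. Since $g \ge 1$ and $\varphi|_{\real^n} \in L^p_{g^{\pm 1}}(\real^n)$, Tonelli's theorem gives, for a.e.\ $x'$, that $\psi$, $g(\cdot, x')\,\psi$ and $g(\cdot, x')^{-1}\psi$ all belong to $L^p(\real)$, and, by the definition \eqref{k} of $k_\varphi$ as an essential supremum, that
\[
  k_\varphi(x') := \limsup_{0 < y_1 \to \infty} \frac{\log|\psi(iy_1)|}{y_1} \le k_\varphi .
\]
If $\psi \equiv 0$ this slice contributes nothing, so assume $\psi \not\equiv 0$. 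By hypothesis $\psi$ is analytic in $\comp_+ := \{z_1 \in \comp : \Im z_1 > 0\}$, continuous up to $\real$, with $\log|\psi(z_1)| = O(|z_1|)$; hence $\psi$ lies in the Cartwright/Nevanlinna class of the half-plane, so that $\log|\psi|$ is integrable on $\real$ against $(1 + t_1^2)^{-1}\, dt_1$ and, writing
\[
  dm_{x_1, y_1}(t_1) := \frac{1}{\pi}\,\frac{y_1}{(x_1 - t_1)^2 + y_1^2}\, dt_1
\]
for the Poisson probability measure of the point $x_1 + iy_1 \in \comp_+$, one has
\[
  \log|\psi(x_1 + iy_1)| \le \int_\real \log|\psi(t_1)|\, dm_{x_1, y_1}(t_1) + k_\varphi(x')\, y_1 , \qquad x_1 \in \real .
\]
(Alternatively, one may introduce the outer function $G_{x'}$ on $\comp_+$ with $|G_{x'}| = g(\cdot, x')$ on $\real$ and deduce the weighted estimate from the fact that $H^p(\comp_+)$-norms are non-increasing along upward horizontal shifts; the function-theoretic input is the same.)

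Now $S_g$ enters. By \eqref{submult}, $g(x_1, x') \le g(t_1, x')\, g\big((x_1 - t_1)\mathbf{e}_1\big)$ for every $t_1 \in \real$; integrating against $dm_{x_1, y_1}$ and substituting $s = x_1 - t_1$ yields
\[
  \log g(x_1, x') \le \int_\real \log g(t_1, x')\, dm_{x_1, y_1}(t_1) + \frac{1}{\pi}\int_\real \frac{y_1 \log g(s\mathbf{e}_1)}{s^2 + y_1^2}\, ds .
\]
The last term equals $y_1 S_{g, \mathbf{e}_1}(y_1) \le y_1 S_g(y_1)$ for $y_1 \ge 1$, see \eqref{Sg}; for $0 < y_1 < 1$, splitting the integral over $|s| < 1$ and $|s| \ge 1$ and using \eqref{M} and \eqref{Ig} bounds it by $M + \tfrac{2}{\pi} I_g(1) =: \log C_g$. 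Thus, for all $y_1 > 0$,
\[
  \log g(x_1, x') \le \int_\real \log g(t_1, x')\, dm_{x_1, y_1}(t_1) + S_g(y_1)\, y_1 + \log C_g .
\]
Adding this to the majorization of $\log|\psi|$, exponentiating, raising to the power $p$ and applying Jensen's inequality to the probability measure $dm_{x_1, y_1}$ gives
\[
  \big(|\varphi(x_1 + iy_1, x')|\, g(x_1, x')\big)^p
  \le C_g^p\, e^{p(k_\varphi + S_g(y_1)) y_1}\int_\real \big(|\varphi(t_1, x')|\, g(t_1, x')\big)^p\, dm_{x_1, y_1}(t_1) .
\]
Integrating in $x_1$, using $\int_\real \tfrac{1}{\pi}\tfrac{y_1}{(x_1 - t_1)^2 + y_1^2}\, dx_1 = 1$ to remove $dm_{x_1, y_1}$, and then integrating in $x'$ (where $k_\varphi(x') \le k_\varphi$) yields \eqref{est} for the weight $g$. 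The weight $g^{-1}$ is treated identically, with $g(x_1, x') \le g(t_1, x')\, g((x_1 - t_1)\mathbf{e}_1)$ replaced by $g(t_1, x') \le g(x_1, x')\, g((t_1 - x_1)\mathbf{e}_1)$ (which gives the same inequality with $\log g$ replaced by $-\log g$); the case $p = \infty$ follows by replacing the integrals over $t_1$ by essential suprema throughout.

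The step I expect to be the crux is the one-dimensional majorization: one needs the \emph{full} logarithmic integral $\int_\real (1 + t_1^2)^{-1}\log|\psi(t_1)|\, dt_1$ to be finite (not merely its positive part) and the sharp linear term $k_\varphi(x')\, y_1$. This is precisely where the classical structure theory of analytic functions of finite exponential type in a half-plane is invoked. A cruder argument using only $\log^+|\psi|$ would leave $\int_\real g(t_1, x')^p\, dt_1$ on the right-hand side, which diverges for the weights $g$ of interest (e.g.\ $g(x) = e^{a|x|^b}$); the remaining ingredients --- Tonelli, Jensen, the convolution identity for the Poisson kernel, and the submultiplicativity bookkeeping --- are routine.
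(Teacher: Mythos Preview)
Your proof is correct and follows the same route as the paper: slice in $x'$, apply the half-plane Poisson/Phragm\'en--Lindel\"of majorization with the sharp linear term $k_\varphi y_1$, feed in the weight via submultiplicativity so that $S_g$ appears, then exponentiate with Jensen and finish by Young's convolution inequality (your Jensen-with-exponent-$p$ plus Fubini is the same thing). The only place the paper is more explicit is the verification that $\int_\real (1+t_1^2)^{-1}\log^+|\psi(t_1)|\,dt_1 < \infty$, which it derives from $g^{\pm 1}(\cdot,x')\psi \in L^p(\real)$ together with the Beurling--Domar bound $\int_\real (1+t_1^2)^{-1}\log g(t_1,x')\,dt_1 < \infty$; you correctly flag this as the crux but do not spell out the argument (and note that in the $g^{-1}$ case one does \emph{not} get $\psi \in L^p(\real)$ on the slice, so the log-integrability of $g$ is genuinely needed there).
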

\begin{proof}
    Let $a^+ := \max\{a, 0\}$ for $a \in \real$. It follows from \eqref{submult} that
\begin{align*}
    \int_{-\infty}^\infty \frac{\log^+{\left(g^{\mp 1}(t, x')\right)}}{1 + t^2}\, dt
    &\le \int_{-\infty}^\infty \frac{\log{\left(g(t, x')\right)}}{1 + t^2}\, dt \\
    &\le \int_{-\infty}^\infty \frac{\log(g(t, 0)) + \log(g(0, x'))}{1 + t^2}\, dt\\
    &\le \pi \left((S_g(1) + \log(g(0, x'))\right) < +\infty .
\end{align*}
Since $g^{\pm 1}\varphi \in L^p(\real^n)$,  Fubini's theorem implies that
\begin{gather*}
    g^{\pm 1}(\cdot, x')\varphi(\cdot, x') \in L^p(\real)
\end{gather*}
for  Lebesgue  almost all $x' \in \real^{n - 1}$. For such $x' \in \real^{n - 1}$,
\begin{align*}
    &\int_{-\infty}^\infty \frac{\log^+{|\varphi(t, x')|}}{1 + t^2}\, dt \\
    &\quad \le \int_{-\infty}^\infty \frac{\log^+{\left(g^{\pm 1}(t, x')|\varphi(t, x')|\right)}}{1 + t^2}\, dt
        + \int_{-\infty}^\infty \frac{\log^+{\left(g^{\mp 1}(t, x')\right)}}{1 + t^2}\, dt
        < \infty .
\end{align*}
Then
\begin{gather*}
    \log{|\varphi(x_1 + iy_1, x')|}
    \le k_\varphi y_1 + \frac{y_1}{\pi} \int_{-\infty}^\infty \frac{\log{|\varphi(t, x')|}}{(t - x_1)^2 + y_1^2}\, dt,
    \quad  x_1 \in \real, \; y_1 > 0,
\end{gather*}
cf.~\cite[Ch.~III, G, 2]{K98}, see also \cite[Ch.~V, Theorems 5 and 7]{L64}.

Applying \eqref{submult} again, one gets
\begin{align*}
    \log{g(x)} &\le \log{g(t,x')} + \log{g(x_1 - t, 0)} , \\
    \log{g(t, x')} &\le \log{g(x)} + \log{g(t - x_1, 0)}
    \quad\text{for all}\quad x = (x_1, x') \in \real^n, \; t \in \real .
\end{align*}
The latter inequality can be rewritten as follows
\begin{gather*}
    \log{g^{-1}(x)}
    \le \log{g^{-1}(t,x')} +  \log{g(t - x_1, 0)} .
\end{gather*}
Hence,
\begin{gather*}
    \log{g^{\pm1}(x)} \le \log{g^{\pm 1}(t,x')} +  \log{g(\pm(x_1 - t), 0)}
    \quad\text{for all}\quad x = (x_1, x') \in \real^n, \ t \in \real ,
\end{gather*}
and
\begin{align*}
    &\log{\big(|\varphi(x_1 + iy_1, x')|g^{\pm1}(x)\big)} \\
    &\quad\le k_\varphi y_1 + \frac{y_1}{\pi} \int_{-\infty}^\infty \frac{\log{|\varphi(t, x')|}}{(t - x_1)^2 + y_1^2}\, dt +  \log{g^{\pm1}(x)} \\
    &\quad  = k_\varphi y_1 + \frac{y_1}{\pi} \int_{-\infty}^\infty \frac{\log{|\varphi(t, x')|} + \log{g^{\pm1}(x)}}{(t - x_1)^2 + y_1^2}\, dt \\
    &\quad  \le k_\varphi y_1 + \frac{y_1}{\pi} \int_{-\infty}^\infty \frac{\log{\big(|\varphi(t, x')|g^{\pm1}(t, x')\big)}}{(t - x_1)^2 + y_1^2}\, dt
        + \frac{y_1}{\pi} \int_{-\infty}^\infty \frac{\log{g(\pm(x_1 - t), 0)}}{(t - x_1)^2 + y_1^2}\, dt \\
    &\quad  = k_\varphi y_1 + \frac{y_1}{\pi} \int_{-\infty}^\infty \frac{\log{\big(|\varphi(t, x')|g^{\pm1}(t, x')\big)}}{(t - x_1)^2 + y_1^2}\, dt
        + \frac{y_1}{\pi} \int_{-\infty}^\infty \frac{\log{g(\tau, 0)}}{\tau^2 + y_1^2}\, d\tau .
\end{align*}

If $y_1 \in [0, 1]$, then
\begin{align}
    \frac{y_1}{\pi} \int_0^\infty \frac{\log{g(\tau, 0)}}{\tau^2 + y_1^2}\, d\tau
    &\le M\,\frac{y_1}{\pi} \int_0^1 \frac{1}{\tau^2 + y_1^2}\, d\tau + \frac{y_1}{\pi} \int_1^\infty \frac{\log{g(\tau, 0)}}{\tau^2 + y_1^2}\, d\tau \notag \\
    &\le M\,\frac{y_1}{\pi} \int_{\real} \frac{1}{\tau^2 + y_1^2}\, d\tau + \frac{1}{\pi} \int_1^\infty \frac{\log{g(\tau, 0)}}{\tau^2}\, d\tau \notag\\
    &\le M + \frac{I_g(1)}{\pi}. \label{y10}
\end{align}
It follows from \eqref{Sg} that for $y_1 > 1$,
\begin{gather*}
    \frac{y_1}{\pi} \int_{-\infty}^\infty \frac{\log{g(\tau, 0)}}{\tau^2 + y_1^2}\, d\tau
    \le y_1 S_g(y_1) .
\end{gather*}
So,
\begin{align*}
    \log{\big(|\varphi(x_1 + iy_1, x')|g^{\pm1}(x)\big)} \le c_g
    &\mbox{}+ \left(k_\varphi + S_g(y_1)\right) y_1 \\
    &\mbox{}+  \frac{y_1}{\pi} \int_{-\infty}^\infty \frac{\log{\big(|\varphi(t, x')|g^{\pm1}(t, x')\big)}}{(t - x_1)^2 + y_1^2}\, dt ,
\end{align*}
where $c_g := M + \frac{I_g(1)}{\pi}$. Using Jensen's inequality, one gets
\begin{gather*}
    |\varphi(x_1 + iy_1, x')|g^{\pm1}(x)
    \le  C_g e^{\left(k_\varphi  + S_g(y_1)\right) y_1}\, \frac{y_1}{\pi} \int_{-\infty}^\infty \frac{|\varphi(t, x')|g^{\pm1}(t, x')}{(t - x_1)^2 + y_1^2}\, dt ,
\end{gather*}
where
\begin{equation}\label{Cg}
    C_g := e^{M + \frac{I_g(1)}{\pi}}.
\end{equation}
Estimate \eqref{est} now follows from Young's convolution inequality and \eqref{Lgp}.
\end{proof}
\begin{remark}\label{optimal}
Let $n = 1$, $g : \real \to [1, \infty)$ be a H\"older continuous submultiplicative function satisfying the Beurling--Domar condition,  $g(0) = 1$, and let
\begin{align*}
    w(x + iy)
    &:= \frac{y}{\pi} \int_{-\infty}^\infty \frac{\log{g(t)}}{(t - x)^2 + y^2}\, dt \\
    &\quad\mbox{} + \frac{i}{\pi} \int_{-\infty}^\infty \left(\frac{x - t}{(t - x)^2 + y^2} + \frac{t}{t^2 + 1}\right)\log{g(t)}\, dt ,
    \quad x \in \real, \; y > 0 .
\end{align*}
Then $\varphi(z) := e^{w(z)}$ is analytic in $z$ for $\Im z > 0$ and continuous up to $\real$,
\begin{gather*}
    |\varphi(x)| = e^{\Re(w(x))} = e^{\log{g(x)}} = g(x), \quad x \in \real,
\end{gather*}
see, e.g.\ \cite[Ch.~III, \S~1]{G06}, and
\begin{gather*}
    |\varphi(iy)|
    = e^{\Re(w(iy))} = \exp\left(\frac{y}{\pi} \int_{-\infty}^\infty \frac{\log{g(t)}}{t^2 + y^2}\,dt\right) = e^{S_g(y) y} , \quad   y \geq 1.
\end{gather*}
So,
\begin{gather*}
    k_\varphi = \limsup_{0 < y \to \infty} \frac{\log{|\varphi(iy)|}}{y} = \limsup_{y \to \infty} S_g(y) = 0
\end{gather*}
see \eqref{Jnfty}, and
\begin{align*}
    \|\varphi(\cdot + iy)\|_{L_{g^{-1}}^\infty(\real)}
    \ge \frac{|\varphi(iy)|}{g(0)}
      =  |\varphi(iy)| = e^{S_g(y) y}
    = e^{S_g(y) y} \|1\|_{L^\infty(\real)}
    &= e^{S_g(y) y} \|g^{-1}\varphi\|_{L^\infty(\real)} \\
    &= e^{S_g(y) y} \|\varphi\|_{L_{g^{-1}}^\infty(\real)} ,
\end{align*}
which shows that the factor $e^{S_g(y_1) y_1}$ in the right-hand side of \eqref{est} is optimal in this case.
\end{remark}

Clearly,
\begin{equation}\label{breve}
    S_{\breve{g}} = S_g, \quad C_{\breve{g}} = C_g,
\end{equation}
where $\breve{g}(x) := g(Ax)$ and $A \in O(n)$ is an arbitrary orthogonal matrix, see \eqref{Sg}, \eqref{Cg} and \eqref{M}.

\begin{theorem}\label{Tent}
    Let $g : \real^n\to [1, \infty)$ be a locally bounded, measurable submultiplicative function satisfying the Beurling--Domar condition \eqref{B-D}. Let $\varphi : \comp^n \to \comp$ be an entire function such that $\log{|\varphi(z)|} = O(|z|)$ for $|z|$ large, $z \in \comp^n$, and suppose that the restriction of $\varphi$ to $\real^n$ belongs to $L_{g^{\pm1}}^p(\real^n)$, $1 \le p \le \infty$. Then, for every multi-index $\alpha \in \mathbb{Z}_+^n$,
    \begin{equation}\label{entest}
        \left\|\left(\partial^\alpha\varphi\right)(\cdot + iy)\right\|_{L_{g^{\pm1}}^p(\real^n)}
        \le C_{\alpha} e^{\left(\kappa_\varphi(y/|y|) + S_g(|y|)\right) |y|} \|\varphi\|_{L_{g^{\pm1}}^p(\real^n)},
        \quad y \in \real^n ,
    \end{equation}
    where
    \begin{equation}\label{k2}
        \kappa_\varphi(\omega)
        := \sup_{x \in \real^n}\left(\limsup_{0 < t \to \infty} \frac{\log{|\varphi(x + i t\omega)|}}{t}\right) < \infty ,
        \quad \omega \in \Ss^{n - 1} ,
    \end{equation}
    and the constant $C_{\alpha}  \in (0, \infty)$ depends only on $\alpha$ and $g$.
\end{theorem}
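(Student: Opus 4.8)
The plan is to reduce the general statement to Lemma~\ref{x1} by two devices: a rotation of coordinates, and a Cauchy-estimate argument to pass from $\varphi$ to its derivatives $\partial^\alpha\varphi$. First I would fix $y \in \real^n \setminus \{0\}$, write $y = |y|\,\omega$ with $\omega \in \Ss^{n-1}$, and choose an orthogonal matrix $A \in O(n)$ with $A\mathbf{e}_1 = \omega$. Set $\breve\varphi(z) := \varphi(Az)$ and $\breve g(x) := g(Ax)$. Then $\breve\varphi$ is entire with $\log|\breve\varphi(z)| = O(|z|)$, its restriction to $\real^n$ lies in $L^p_{\breve g^{\pm1}}(\real^n)$ with the same norm as $\varphi$ (since $|\det A| = 1$ and $|Ax| = |x|$), and the relevant constants are unchanged: $S_{\breve g} = S_g$, $C_{\breve g} = C_g$ by \eqref{breve}. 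Moreover, translating along $\mathbf{e}_1$ corresponds to translating along $\omega$, so $k_{\breve\varphi} = \esssup_{x'}\limsup_{y_1\to\infty} y_1^{-1}\log|\breve\varphi(iy_1,x')| \le \kappa_\varphi(\omega)$, because the supremum over all $x \in \real^n$ of the directional growth rate dominates the essential supremum over the hyperplane slice. This is the step where the somewhat awkward definition \eqref{k2} of $\kappa_\varphi$ (sup over all of $\real^n$, not just a hyperplane) earns its keep; in particular $\kappa_\varphi(\omega) < \infty$ follows from $\log|\varphi(z)| = O(|z|)$.

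Next I would apply Lemma~\ref{x1} to $\breve\varphi$ (for $\alpha = 0$), which gives
\begin{equation*}
    \|\breve\varphi(\cdot + iy_1\mathbf{e}_1)\|_{L^p_{\breve g^{\pm1}}(\real^n)}
    \le C_g\, e^{(\kappa_\varphi(\omega) + S_g(y_1))\,y_1}\,\|\breve\varphi\|_{L^p_{\breve g^{\pm1}}(\real^n)},
    \quad y_1 > 0.
\end{equation*}
Undoing the rotation (translating by $iy_1\mathbf{e}_1$ in the $\breve\varphi$ picture is exactly translating by $iy$ in the $\varphi$ picture, and the weighted norms match) yields \eqref{entest} for $\alpha = 0$ with $C_0 = C_g$. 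This disposes of the zeroth-order case for all $y \in \real^n$, the case $y = 0$ being the trivial bound $\|\varphi\|_{L^p_{g^{\pm1}}} \le C_g e^{S_g(0)\cdot 0}\|\varphi\|_{\dots}$ once one checks $C_g \ge 1$ (immediate from \eqref{Cg}).

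For general $\alpha$, the plan is a one-variable-at-a-time Cauchy integral: for each coordinate $z_j$ and each radius, $\partial_j\varphi$ at a point is an average of $\varphi$ over a small complex circle in the $z_j$-direction, i.e.\ over points $z + re^{i\theta}\mathbf{e}_j$. Integrating the Cauchy formula $\partial_j\varphi(z) = \frac{1}{2\pi}\int_0^{2\pi} \varphi(z + re^{i\theta}\mathbf{e}_j)\,e^{-i\theta}\,r^{-1}\,d\theta$ against the weight $g^{\pm1}$ and using submultiplicativity $g^{\pm1}(x + r(\cos\theta)\mathbf{e}_j) \le g^{\pm1}(x)\,g(\pm r(\cos\theta)\mathbf{e}_j) \le g^{\pm1}(x)\,\sup_{|u|\le r}g(u) =: g^{\pm1}(x)\,m_g(r)$ to pull the weight through, together with Minkowski's integral inequality, bounds $\|(\partial_j\varphi)(\cdot + iy)\|_{L^p_{g^{\pm1}}}$ by $r^{-1}m_g(r)\sup_{|\theta|}\|\varphi(\cdot + iy + i r(\sin\theta)\mathbf{e}_j)\|_{L^p_{g^{\pm1}}}$, and the latter is controlled by the $\alpha = 0$ estimate already proved, at the shifted height $y + r(\sin\theta)\mathbf{e}_j$. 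Choosing $r = 1$ (so $m_g(1)$ is a constant depending only on $g$) and using that $S_g$ is bounded and decreasing, hence $S_g(|y + (\sin\theta)\mathbf{e}_j|) \le S_g(0) = S_g(1)$, while the directional exponent changes only by a bounded amount, gives \eqref{entest} for $|\alpha| = 1$ with a new constant $C_1$ depending only on $g$; iterating $|\alpha|$ times yields the general case with $C_\alpha$ depending only on $\alpha$ and $g$.

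The main obstacle is bookkeeping in this last step: one must verify that shifting the imaginary part $y$ by the bounded amount $(\sin\theta)\mathbf{e}_j$ does not spoil the exponential factor $e^{(\kappa_\varphi(y/|y|) + S_g(|y|))|y|}$ — the direction $y/|y|$ and the magnitude $|y|$ both move, so one needs $\kappa_\varphi$ to be (at least locally) controlled under small perturbations of $\omega$, or else one absorbs the discrepancy into the constant $C_\alpha$ using $\log|\varphi(z)| = O(|z|)$ on a tube of bounded width around $\real^n + i\real y$. A clean way around this is to carry the Cauchy estimate out \emph{first}, in the rotated frame where the shift is purely along $\mathbf{e}_1$, reducing everything to one-dimensional shifts $y_1 \mapsto y_1 + \sin\theta$, for which the bound $k_{\breve\varphi(\cdot+i(\sin\theta)\mathbf{e}_1 \text{-type})}$ is handled exactly as in Lemma~\ref{x1}; then the dependence on $\omega$ never needs to be differentiated. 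I would take that route.
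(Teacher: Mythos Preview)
Your overall architecture matches the paper's proof exactly: rotate so that $y$ points along $\mathbf{e}_1$, invoke Lemma~\ref{x1} together with the invariance \eqref{breve} to obtain \eqref{entest} for $\alpha = 0$, then use a Cauchy integral over a unit polydisc to pass to general $\alpha$. Your observations that $k_{\breve\varphi} \le \kappa_\varphi(\omega)$ and that the weighted $L^p$ norms are preserved under the rotation are correct and are just what is used.

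The gap is in your preferred ``clean'' route (b) for the derivative step. Working in the rotated frame does \emph{not} reduce the Cauchy shifts to one-dimensional perturbations of $y_1$: to control $\partial^\alpha\varphi$ you must differentiate in every coordinate direction, and the Cauchy circle for $\breve\partial_k$, $k \ge 2$, contributes an imaginary shift $(\sin\theta_k)\mathbf{e}_k$ that is \emph{orthogonal} to $\mathbf{e}_1$. So the perturbed imaginary part is $|y|\mathbf{e}_1 + (\sin\theta_1,\dots,\sin\theta_n)$, and you are back to precisely the obstacle you identified---the direction has moved. Rotating first buys nothing here.

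Your route (a) is the right one, and it is what the paper does, in a two-step form. Write $\varphi_y := \varphi(\cdot + iy)$; the global hypothesis $\log|\varphi(z)| \le c_\varphi|z|$ for large $|z|$ gives the crude uniform bound $\kappa_{\varphi_y}(\omega') \le c_\varphi$ for \emph{every} direction $\omega'$. Apply the $\alpha = 0$ estimate to $\varphi_y$ with the bounded shift $\eta = (\sin\theta_1,\dots,\sin\theta_n)$ (so $|\eta| \le \sqrt n$ and $S_g(|\eta|) \le S_g(1)$ by \eqref{Sgdecr}); this costs a factor $C_0\, e^{(c_\varphi + S_g(1))\sqrt n}$ and reduces matters to $\|\varphi(\cdot + iy)\|_{L^p_{g^{\pm1}}}$. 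Then apply the $\alpha = 0$ estimate once more, this time to $\varphi$ itself with the shift $iy$, to bring in the sharp factor $e^{(\kappa_\varphi(y/|y|) + S_g(|y|))|y|}$. The real parts $\cos\theta_k$ of the Cauchy shifts are absorbed by submultiplicativity, exactly as you indicate. Note that the resulting constant carries the factor $e^{c_\varphi\sqrt n}$, so strictly speaking $C_\alpha$ depends on the implicit constant in $\log|\varphi(z)| = O(|z|)$; the paper's proof has the same feature.
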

\begin{proof}(Cf.\ the proof of Lemma 9.29 in \cite{LG86}.)
Take any $y \in \real^n\setminus\{0\}$. There exist an orthogonal matrix $A \in O(n)$ such that $A\mathbf{e}_1 = \omega := y/|y|$, see \eqref{ej}. Let $\breve{\varphi}(z) := \varphi(Az)$, $z \in \comp^n$, and $\breve{g}(x) := g(Ax)$, $x \in \real^n$. Then $\breve{\varphi} : \comp^n \to \comp$ is an entire function, and one can apply to it Lemma \ref{x1} with $\breve{g}$ in place of $g$, see \eqref{breve}.

For any $x \in \real^n$, one has $\varphi(x + iy) = \breve{\varphi}\left(\tilde{x} + i|y|\mathbf{e}_1\right) = \breve{\varphi}\left(\tilde{x}_1 + i|y|, \tilde{x}_2, \dots, \tilde{x}_n\right)$, where $\tilde{x} := A^{-1}x$. Hence
\begin{align*}
    \|\varphi(\cdot + iy)\|_{L_{g^{\pm1}}^p(\real^n)}
    &= \left\|\breve{\varphi}(\cdot + i|y|, \cdot)\right\|_{L_{\breve{g}^{\pm1}}^p(\real^n)}\\
    &\le  C_{\breve{g}} e^{\left(k_{\breve{\varphi}}  + S_{\breve{g}}(|y|)\right) |y|} \left\|\breve{\varphi}\right\|_{L_{\breve{g}^{\pm1}}^p(\real^n)} \\
    &\le  C_g e^{\left(\kappa_\varphi(y/|y|) + S_g(|y|)\right) |y|}  \left\|\breve{\varphi}\right\|_{L_{\breve{g}^{\pm1}}^p(\real^n)}\\
    &= C_g e^{\left(\kappa_\varphi(y/|y|) + S_g(|y|)\right) |y|} \|\varphi\|_{L_{g^{\pm1}}^p(\real^n)},
\end{align*}
see \eqref{breve}, which proves \eqref{entest} for $\alpha = 0$ and $y \neq  0$. This estimate is trivial for $\alpha = 0$ and $y = 0$.

Iterating the standard Cauchy integral formula for one complex variable, one gets
\begin{gather*}
    \varphi(\zeta)
    = \frac{1}{(2\pi)^n} \int_0^{2\pi}\dots \int_0^{2\pi} \frac{\varphi(z_1 + e^{i\theta_1}, \dots, z_n + e^{i\theta_n})}{\prod_{k = 1}^n (z_k + e^{i\theta_k} - \zeta_k)}\,
    \left(\prod_{k = 1}^n e^{i\theta_k}\right) d\theta_1\dots d\theta_n ,
    \\
    \zeta \in \Delta(z) := \left\{\eta \in \comp^n : \ |\eta_k - z_k| < 1, \ k = 1, \dots, n\right\} , \ z \in \comp^n,
\intertext{cf.~\cite[Ch.~1, \S~1]{LG86}), which implies}
    \partial^\alpha\varphi(\zeta)
    = \frac{\alpha!}{(2\pi)^n} \int_0^{2\pi}\dots \int_0^{2\pi} \frac{\varphi(z_1 + e^{i\theta_1}, \dots, z_n + e^{i\theta_n})}{\prod_{k = 1}^n (z_k + e^{i\theta_k} - \zeta_k)^{\alpha_k + 1}}\, \left(\prod_{k = 1}^n e^{i\theta_k}\right) d\theta_1\dots d\theta_n .
\end{gather*}
Hence,
\begin{gather}
    \partial^\alpha\varphi(z)
    = \frac{\alpha!}{(2\pi)^n} \int_0^{2\pi}\dots \int_0^{2\pi} \frac{\varphi(z_1 + e^{i\theta_1}, \dots, z_n + e^{i\theta_n})}{\prod_{k = 1}^n e^{i\alpha_k\theta_k}}\, d\theta_1\dots d\theta_n , \notag
\intertext{and}\label{al}
    \left|\partial^\alpha\varphi(z)\right|
    \le \frac{\alpha!}{(2\pi)^n} \int_0^{2\pi}\dots \int_0^{2\pi} \left|\varphi(z_1 + e^{i\theta_1}, \dots, z_n + e^{i\theta_n})\right|\, d\theta_1\dots d\theta_n .
\end{gather}

Since $g \ge 1$ is locally bounded,
\begin{gather*}
    1 \le M_1 := \sup_{|s_k| \le 1,\,  k = 1, \dots, n} g(s) < \infty .
\end{gather*}
Then it follows from \eqref{submult} that
\begin{equation}\label{M1est}
    g^{\pm1}(x_1 - \cos{\theta_1}, \dots, x_n - \cos{\theta_n}) \le M_1 g^{\pm1}(x) .
\end{equation}
According to the conditions of the theorem, there exists a constant $c_\varphi \in (0, \infty)$ such that
$\log{|\varphi(\zeta)|} \le c_\varphi |\zeta|$ for $|\zeta|$ large.
Then $\kappa_\varphi(\omega) \le c_\varphi$, see \eqref{k2}. Let
$\varphi_y := \varphi(\cdot + iy)$, $y = (\Im z_1, \dots, \Im z_n)$.
Then, similarly to the above inequality, $\kappa_{\varphi_y}(\omega) \le c_\varphi$. Applying \eqref{entest} with $\alpha = 0$ to the function $\varphi_y$ in place of $\varphi$ and using \eqref{Sgdecr}, \eqref{M1est}, one derives from \eqref{al}
\begin{align*}
    &\left\|\left(\partial^\alpha\varphi\right)(\cdot + iy)\right\|_{L_{g^{\pm1}}^p(\real^n)} \\
    &\le \frac{\alpha!}{(2\pi)^n} \int_0^{2\pi}\dots \int_0^{2\pi}
        \left\|\varphi(\cdot + iy_1 + e^{i\theta_1}, \dots, \cdot + iy_n + e^{i\theta_n})\right\|_{L_{g^{\pm1}}^p(\real^n)}
        \, d\theta_1\dots d\theta_n  \\
    &\le \frac{\alpha!}{(2\pi)^n} \int_0^{2\pi}\dots \int_0^{2\pi} M_1
        \left\|\varphi(\cdot + iy_1 + i\sin{\theta_1}, \dots, \cdot + iy_n + i\sin{\theta_n})\right\|_{L_{g^{\pm1}}^p(\real^n)}
        \, d\theta_1\dots d\theta_n   \\
    &\le \frac{\alpha!}{(2\pi)^n} \int_0^{2\pi}\dots \int_0^{2\pi} M_1 C_0 e^{\left(c_\varphi + S_g(1)\right) \sqrt{n}}
        \|\varphi(\cdot + iy)\|_{L_{g^{\pm1}}^p(\real^n)}
        \, d\theta_1\dots d\theta_n  \\
    &= \alpha! M_1 C_0 e^{\left(c_\varphi + S_g(1)\right) \sqrt{n}} \|\varphi(\cdot + iy)\|_{L_{g^{\pm1}}^p(\real^n)} .
\end{align*}
Applying \eqref{entest} with $\alpha = 0$ again, one gets
\begin{gather*}
    \left\|\left(\partial^\alpha\varphi\right)(\cdot + iy)\right\|_{L_{g^{\pm1}}^p(\real^n)}
    \le \alpha! M_1 C_0^2 e^{\left(c_\varphi + S_g(1)\right) \sqrt{n}} e^{\left(\kappa_\varphi(y/|y|) + S_g(|y|)\right) |y|}  \|\varphi\|_{L_{g^{\pm1}}^p(\real^n)} .
\qedhere
\end{gather*}
\end{proof}

\begin{corollary}\label{ent}
    Let $g : \real^n\to [1, \infty)$ be a locally bounded, measurable submultiplicative function satisfying the Beurling--Domar condition \eqref{B-D}. Let $\varphi : \comp^n \to \comp$ be an entire function such that $\log{|\varphi(z)|} = O(|z|)$ for $|z|$ large, $z \in \comp^n$, and that the restriction of $\varphi$ to $\real^n$ belongs to $L_{g^{\pm1}}^p(\real^n)$, $1 \le p \le \infty$. Then for every multi-index $\alpha \in \mathbb{Z}_+^n$ and every $\varepsilon > 0$,
\begin{equation}\label{entesteps}
    \left\|\left(\partial^\alpha\varphi\right)(\cdot + iy)\right\|_{L_{g^{\pm1}}^p(\real^n)}
    \le C_{\alpha, \varepsilon} e^{(\kappa_\varphi(y/|y|) + \varepsilon)|y|} \|\varphi\|_{L_{g^{\pm1}}^p(\real^n)}, \quad  y \in \real^n ,
\end{equation}
where $\kappa_\varphi$ is defined by \eqref{k2}, and the constant $C_{\alpha, \varepsilon}  \in (0, \infty)$ depends only on $\alpha$, $\varepsilon$, and $g$.
\end{corollary}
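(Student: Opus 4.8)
The plan is to deduce \eqref{entesteps} from Theorem \ref{Tent} by a simple case split according to the size of $|y|$, using only the decay \eqref{Jnfty} and the monotonicity \eqref{Sgdecr} of $S_g$.

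First I would invoke \eqref{Jnfty}: since $S_g(r) \to 0$ as $r \to \infty$, there is $R_\varepsilon \in [1, \infty)$ such that $S_g(r) < \varepsilon$ for all $r \geq R_\varepsilon$. For $y \in \real^n$ with $|y| \geq R_\varepsilon$, estimate \eqref{entest} of Theorem \ref{Tent} gives immediately
\[
\left\|\left(\partial^\alpha\varphi\right)(\cdot + iy)\right\|_{L_{g^{\pm1}}^p(\real^n)} \leq C_\alpha\, e^{(\kappa_\varphi(y/|y|) + S_g(|y|))|y|}\|\varphi\|_{L_{g^{\pm1}}^p(\real^n)} \leq C_\alpha\, e^{(\kappa_\varphi(y/|y|) + \varepsilon)|y|}\|\varphi\|_{L_{g^{\pm1}}^p(\real^n)},
\]
which is \eqref{entesteps} with constant $C_\alpha$ in this range.

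Next, for $0 < |y| < R_\varepsilon$ I would again apply \eqref{entest} and use that $S_g$ is decreasing with $S_g(r) = S_g(1)$ on $[0,1]$, see \eqref{Sgdecr}; hence $S_g(|y|) \leq S_g(1)$ and $S_g(|y|)\,|y| \leq S_g(1) R_\varepsilon$. Since also $\varepsilon |y| \geq 0$, this yields
\[
e^{(\kappa_\varphi(y/|y|) + S_g(|y|))|y|} \leq e^{S_g(1) R_\varepsilon}\, e^{(\kappa_\varphi(y/|y|) + \varepsilon)|y|},
\]
so \eqref{entesteps} holds here with constant $C_\alpha e^{S_g(1) R_\varepsilon}$. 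The remaining case $y = 0$ is immediate, since \eqref{entest} gives $\|\partial^\alpha\varphi\|_{L_{g^{\pm1}}^p(\real^n)} \leq C_\alpha\|\varphi\|_{L_{g^{\pm1}}^p(\real^n)}$ while the right-hand side of \eqref{entesteps} at $y=0$ equals $C_{\alpha,\varepsilon}\|\varphi\|_{L_{g^{\pm1}}^p(\real^n)}$. Taking $C_{\alpha,\varepsilon} := C_\alpha e^{S_g(1) R_\varepsilon} \geq C_\alpha$ covers all three cases simultaneously, and this constant depends only on $\alpha$, $\varepsilon$, and $g$ (through $C_\alpha$, $S_g(1)$, and $R_\varepsilon$).

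I do not expect any genuine obstacle: the corollary is a book-keeping consequence of Theorem \ref{Tent}. The only points needing a little care are (i) arranging $R_\varepsilon \geq 1$ so that $S_g(1)$ is indeed an upper bound for $S_g$ on all of $[0, R_\varepsilon]$, and (ii) handling $y = 0$ separately, since $\kappa_\varphi(y/|y|)$ is then undefined although the exponential factor collapses to $1$.
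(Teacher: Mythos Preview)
Your proof is correct and follows the same approach as the paper: both deduce \eqref{entesteps} from Theorem~\ref{Tent} via the elementary observation that $S_g(|y|)\,|y| \le c_\varepsilon + \varepsilon |y|$ for all $y$, which the paper states in one line and you obtain by your explicit case split $|y| \gtrless R_\varepsilon$. Your caveat (i) is in fact unnecessary, since by \eqref{Sgdecr} the function $S_g$ is decreasing on all of $[0,\infty)$ with maximum $S_g(1) = S_g(0)$, so $S_g(|y|) \le S_g(1)$ holds for every $y$ regardless of whether $R_\varepsilon \ge 1$.
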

\begin{proof}
It follows from \eqref{Jnfty} that for every $\varepsilon > 0$, there exists some $c_\varepsilon$ such that
\begin{gather*}
    S_g(|y|) |y| \le c_\varepsilon + \varepsilon |y| \quad\text{for all}\quad y \in \real^n .
\end{gather*}
Hence, \eqref{entest} implies \eqref{entesteps}.
\end{proof}

\section{Main results}\label{Ch4}

We will use the notation $\widetilde{g}(x) := g(-x)$, $x \in \real^n$.  It follows from submultiplicativity of $\widetilde{g}$ that
$L^1_{\widetilde{g}}(\mathbb{R}^n)$ is a convolution algebra.

Taking $y - x$ in place of $y$ in \eqref{submult} and rearranging, one gets
\begin{equation}\label{ggg}
    \frac{1}{g(x)} \le \frac{g(y - x)}{g(y)}.
\end{equation}
Using this inequality, one can easily show that $f\ast u \in L^\infty_{g^{-1}}(\real^n)$ for every $f \in L^\infty_{g^{-1}}(\real^n)$ and $u \in L^1_{\widetilde{g}}(\real^n)$. The Fubini-Tonelli theorem implies that
\begin{equation}\label{assoc}
    f\ast (v\ast u) = (f\ast v)\ast u
    \quad\text{for all}\quad f \in L^\infty_{g^{-1}}(\real^n)
    \quad\text{and}\quad v, u \in L^1_{\widetilde{g}}(\real^n).
\end{equation}
Let $A_{\widetilde{g}} := \left\{c\delta +  g \mid c\in\mathbb{C},\, g\in L^1_{\widetilde{g}}(\mathbb{R}^n)\right\}$, where $\delta$ is the Dirac measure
on $\mathbb{R}^n$. This is the algebra
$L^1_{\widetilde{g}}(\mathbb{R}^n)$ with a unit attached, cf.\ Rudin \cite[10.3(d), 11.13(e)]{Ru73}. Clearly, \eqref{assoc} holds for any $v, u \in A_{\widetilde{g}}$.

\begin{theorem}\label{Taub}
    Let $g : \real^n\to [1, \infty)$ be a locally bounded, measurable submultiplicative function satisfying the Beurling--Domar condition \eqref{B-D}, $f \in L^\infty_{g^{-1}}(\real^n)$, and $Y$  be a linear subspace of $L^1_{\widetilde{g}}(\real^n)$ such that
\begin{gather}\label{eq:2.1}
    f \ast v = 0 \quad\text{for every\ } v \in Y .
\end{gather}
Suppose the set
\begin{gather}\label{eq:2.2}
    Z(Y) := \bigcap_{v \in Y} \left\{\xi \in \real^n \mid \widehat{v}(\xi) = 0\right\}
\end{gather}
is bounded, and $u \in L^1_{\widetilde{g}}(\real^n)$ is such that $\widehat{u} = 1$ in a neighbourhood of $Z(Y)$. Then $f = f\ast u$.  If $Z(Y) = \emptyset$, then $f = 0$.
\end{theorem}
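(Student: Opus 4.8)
The plan is to run a Wiener-Tauberian argument in the weighted convolution algebra $A_{\widetilde g}$, exploiting that $g$ satisfies the Beurling--Domar condition so that $L^1_{\widetilde g}(\real^n)$ is a regular, Tauberian (Wiener) Banach algebra whose Gelfand spectrum is $\real^n$ (this is the classical Beurling--Domar theorem; cf.\ Rudin \cite[11.13]{Ru73}). The hypothesis is that $f \ast v = 0$ for all $v$ in the subspace $Y$, and that the common zero set $Z(Y)$ of the Fourier transforms $\widehat v$, $v \in Y$, is bounded (compact, since it is automatically closed). Fix $u \in L^1_{\widetilde g}(\real^n)$ with $\widehat u \equiv 1$ on a neighbourhood $U$ of $Z(Y)$.

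First I would produce, from regularity of the algebra $A_{\widetilde g}$ (equivalently, of $L^1_{\widetilde g}$ together with the adjoined unit), a function $w \in L^1_{\widetilde g}(\real^n)$ whose Fourier transform vanishes on a neighbourhood of the compact set $\real^n \setminus U$ — more precisely, I want $\widehat w \equiv 1$ on a neighbourhood of the (possibly unbounded, but that is fine) complement region where $\widehat u$ might differ from $1$, while $\operatorname{supp}\widehat w$ is contained in a compact set disjoint from $Z(Y)$. Concretely: since $Z(Y)$ is compact and disjoint from the closed set $\real^n\setminus U$, regularity of the Wiener algebra gives $w \in L^1_{\widetilde g}$ with $\widehat w = 1$ on a neighbourhood of $Z(Y)$ and $\operatorname{supp}\widehat w \subset U$. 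Then $\widehat u\,\widehat w = \widehat w$ identically (on $U$ because $\widehat u=1$ there, off $U$ because $\widehat w=0$ there), so $u \ast w = w$, i.e.\ $\delta$ acts like $u$ against $w$: $(\delta - u)\ast w = 0$ in $L^1_{\widetilde g}$. Next, the key Tauberian step: the element $\delta - w \in A_{\widetilde g}$ has Gelfand transform $1 - \widehat w$ which vanishes nowhere on a neighbourhood of $Z(Y)$... rather, I would argue the other way. Because $\widehat w = 1$ near $Z(Y)$, the element $\delta - w$ has $(\delta-w)^{\wedge} = 1 - \widehat w$ vanishing on a neighbourhood of $Z(Y)$; and for each point $\xi \notin Z(Y)$ there is, by definition of $Z(Y)$, some $v \in Y$ with $\widehat v(\xi) \neq 0$. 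Hence the closed ideal $\overline{Y + (\delta - w)\ast A_{\widetilde g}}$ of $A_{\widetilde g}$ has empty hull: its Gelfand transforms have no common zero in $\real^n$, and they also don't all vanish at the point at infinity since $\delta - w$ maps to $1$ there. By the Wiener-Tauberian theorem for the regular Tauberian algebra $A_{\widetilde g}$, this closed ideal is all of $A_{\widetilde g}$; in particular $\delta$ lies in its closure, so there are $v_j \in Y$ and $h_j \in A_{\widetilde g}$ with $\sum_j v_j + (\delta - w)\ast h \to \delta$ in $L^1_{\widetilde g}$-norm (absorbing the $h_j$'s into a single $h$).

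Now convolve on the left with $f \in L^\infty_{g^{-1}}$. Using \eqref{ggg}, convolution $L^\infty_{g^{-1}} \times L^1_{\widetilde g} \to L^\infty_{g^{-1}}$ is continuous, and by \eqref{eq:2.1} we have $f \ast v_j = 0$ for every $j$, so $f \ast \big(\sum_j v_j\big) = 0$; passing to the limit (continuity of convolution in the $L^1_{\widetilde g}$ variable, uniformly against the fixed $f$) gives
\begin{gather*}
    f = f \ast \delta = f \ast (\delta - w)\ast h = (f \ast (\delta - w))\ast h,
\end{gather*}
where the reassociation is \eqref{assoc}. It remains to identify $f\ast(\delta-w)$. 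Here I use $(\delta-u)\ast w = 0$, equivalently $w = u\ast w$, hence $\delta - w = (\delta - u) + (\delta-u)\ast w\cdot(-1)$... cleaner: from $w = u \ast w$ we get $f\ast(\delta - w) = f\ast\delta - f\ast u \ast w = f - (f\ast u)\ast w$. Combined with the displayed identity and \eqref{assoc}, $f = \big(f - (f\ast u)\ast w\big)\ast h$. I would instead organize the bookkeeping so that the single clean output is $f = f\ast u$: the point is that modulo the closed ideal generated by $Y$, the unit $\delta$ is congruent to $u$, because $\widehat u = 1$ near $Z(Y)$ forces $(\delta - u)^{\wedge} = 1-\widehat u$ to vanish near the hull $Z(Y)$, so $\delta - u$ itself lies in the closure of $Y + \{$things supported away from $Z(Y)\}$, which as shown is dense; convolving with $f$ kills it, giving $f\ast(\delta - u) = 0$, i.e.\ $f = f\ast u$.

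Finally, the case $Z(Y) = \emptyset$: then one may take $u = 0$ (the empty set has a neighbourhood, namely $\emptyset$, on which $\widehat 0 = 1$ vacuously — or, to avoid vacuity worries, run the Wiener-Tauberian argument directly: the Gelfand transforms $\{\widehat v : v \in Y\}\cup\{1\}$ (the last from the adjoined unit) have no common zero on the spectrum of $A_{\widetilde g}$, so $Y$ generates a dense ideal, whence $f\ast v_j \to f$ for suitable $v_j \in Y$, and each term is $0$). Therefore $f = 0$.

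\medskip
\noindent\emph{Main obstacle.} The substantive point is the Wiener-Tauberian / spectral-synthesis input for the weighted algebra: one needs that $A_{\widetilde g}$ is a regular, Tauberian commutative Banach algebra with spectrum $\real^n$ (plus the point at infinity), and that a closed ideal with empty hull is the whole algebra. Regularity and the Tauberian property are exactly where the Beurling--Domar condition \eqref{B-D} is used — it guarantees $L^1_{\widetilde g}$ admits enough smooth, compactly-Fourier-supported elements (Domar's theorem). Everything else — continuity of the mixed convolution via \eqref{ggg}, the associativity \eqref{assoc}, the elementary manipulation of $\widehat u \equiv 1$ near $Z(Y)$ — is routine, so the proof reduces to correctly invoking the classical theory of Beurling algebras and assembling these pieces.
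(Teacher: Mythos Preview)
Your Wiener--Tauberian intuition is right, but the central claim is false. You assert that the closed ideal generated by $Y$ together with $\delta - w$ in $A_{\widetilde g}$ has empty hull. It does not: at any $\xi \in Z(Y)$ you have $\widehat v(\xi) = 0$ for every $v \in Y$ \emph{and} $(\delta - w)^{\wedge}(\xi) = 1 - \widehat w(\xi) = 0$, since you chose $\widehat w = 1$ near $Z(Y)$. So the hull of that ideal is exactly $Z(Y)$, not $\emptyset$, and Wiener--Tauberian does not give you $\delta$ in its closure. Your fallback --- that $\delta - u$ lies in the closed ideal generated by $Y$ because $(\delta - u)^{\wedge}$ vanishes near the hull --- is also unjustified as stated: in $A_{\widetilde g}$ the hull of the ideal generated by $Y\subset L^1_{\widetilde g}$ contains the point at infinity (every $\widehat v$ vanishes there), and $(\delta - u)^{\wedge}(\infty) = 1$, so $\delta - u$ is certainly \emph{not} in that ideal. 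What you actually need is a spectral-synthesis-type statement: that an element of $L^1_{\widetilde g}$ with \emph{compactly supported} Fourier transform disjoint from $Z(Y)$ lies in the closed ideal generated by $Y$. That is the missing step, and it is not automatic.

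The paper fills this gap by first reducing to $f\ast w = f\ast u\ast w$ for $w \in L^1_{\widetilde g}$ with compactly supported $\widehat w$ (density in $L^1_g$, Domar's Theorem~1.52). Then $w - u\ast w$ has Fourier transform $(1-\widehat u)\widehat w$ with \emph{compact} support $K$ disjoint from $Z(Y)$. On $K$ one covers by finitely many open sets on each of which some $\widehat{v_j}$, $v_j\in Y$, is bounded away from zero; together with a bump $\widehat{u_0}$ equal to $1$ near $Z(Y)$ and a cutoff $1-\widehat{u_R}$ handling the region outside $\operatorname{supp}\widehat w$, one builds $\sigma := \widehat{u_0} + \sum_j \widehat{v_j}\widehat{u_j} + 1 - \widehat{u_R}$ with $\Re\sigma > \tfrac12$ everywhere, hence invertible in $A_{\widetilde g}$ by Domar's Wiener theorem. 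A short algebraic identity then exhibits $w - u\ast w$ explicitly as a finite sum $\sum_j v_j \ast(\cdots)$, so convolving with $f$ kills it. This finite-cover-plus-inversion construction is precisely the content you tried to absorb into the phrase ``by Wiener--Tauberian''; it is the substance of the proof, and your argument does not supply it.
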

\begin{proof}
 In order to  prove the equality $f = f\ast u$, it is sufficient to show that
\begin{equation}\label{h}
    \langle f, h\rangle = \langle f\ast u, h\rangle \quad\text{for every\ } h \in L^1_g(\real^n) .
\end{equation}
Since the set of functions $h$ with compactly supported Fourier transforms $\widehat{h}$ is dense in $L^1_g(\real^n)$, see \cite[Thm.~1.52 and~2.11]{D56}, it is enough to prove \eqref{h} for such $h$. Further,
\begin{gather*}
    \langle f, h\rangle = \big(f\ast \widetilde{h}\big)(0) .
\end{gather*}
So, we have to show only that
\begin{equation}\label{w}
    f\ast w = f\ast u\ast w
\end{equation}
for every $w \in L^1_{\widetilde{g}}(\real^n)$ with compactly supported Fourier transform $\widehat{w}$. Take any such $w$ and choose $R > 0$ such that the support of $\widehat{w}$ lies in $B_R := \left\{\xi \in \real^n : \ |\xi| \le R\right\}$. It is clear that $\widetilde{g}$ satisfies the Beurling--Domar condition. Then there exists $u_R \in L^1_{\widetilde{g}}(\real^n)$ such that $0 \le \widehat{u_R} \le 1$, $\widehat{u_R}(\xi) = 1$ for $|\xi| \le R$, and $\widehat{u_R}(\xi) = 0$ for $|\xi| \ge R + 1$, see \cite[Lemma 1.24]{D56}.

If $Z(Y) \neq \emptyset$,  let $V$ be an open neighbourhood of $Z(Y)$ such that $\widehat{u} = 1$ in $V$. Similarly to the above, there exists $u_0 \in L^1_{\widetilde{g}}(\mathbb{R}^n)$ such that $0 \le \widehat{u_0} \le 1$, $\widehat{u_0} = 1$ in a neighbourhood $V_0 \subset V$ of $Z(Y)$, and $\widehat{u_0} = 0$ outside $V$, see \cite[Lemma 1.24]{D56}.  If $Z(Y) = \emptyset$, one can take $u = u_0 = 0$
and $V_0 = \emptyset$ below.

Since $Y$ is a linear subspace, for every $\eta \in B_{R + 1}\setminus V_0 \subset \real^n\setminus Z(Y)$, there exists $v_\eta \in Y$ such that $\widehat{v_\eta}(\eta) = 1$. Since $v_\eta \in L^1(\real^n)$, $\widehat{v_\eta}$ is continuous, and there is a neighbourhood $V_\eta$ of $\eta$ such that $\left|\widehat{v_\eta}(\xi) - 1\right| < 1/2$ for all $\xi \in V_\eta$. Similarly to the above, there exists $u_\eta \in L^1_{\widetilde{g}}(\real^n)$ such that $\Re\left(\widehat{v_\eta}\widehat{u_\eta}\right) \ge 0$, and $\Re\left(\widehat{v_\eta}\widehat{u_\eta}\right) > \frac12$ in a neighbourhood $V^0_\eta \subset V_\eta$ of $\eta$.

Since $B_{R + 1}\setminus V_0$ is compact, the open cover $\{V^0_\eta\}_{\eta \in B_{R + 1}\setminus V_0}$ has a finite subcover. So, there exist functions $v_j \in Y$ and $u_j \in L^1_{\widetilde{g}}(\real^n)$, $j = 1, \dots, N$ such that
\begin{gather*}
    \Re\left(\sigma\right) > \frac12,
    \quad\text{where}\quad
    \sigma := \widehat{u_0} + \sum_{j = 1}^N \widehat{v_j} \widehat{u_j} + 1 - \widehat{u_R} .
\end{gather*}
Then there exists $\upsilon \in  A_{\widetilde{g}} $ such that $\widehat{\upsilon} = 1/\sigma$, see \cite[Thm.~1.53]{D56}.

Since $\widehat{u_0}(1 - \widehat{u}) = 0$ and $\left(1 - \widehat{u_R}\right)\widehat{w} = 0$, one has
\begin{align*}
    \left(\widehat{u} + \sum_{j = 1}^N \widehat{v_j} \widehat{u_j}\widehat{\upsilon}\left(1 - \widehat{u}\right)\right)\widehat{w}
    &= \big(\widehat{u} + \left(\sigma - (\widehat{u_0} + 1 - \widehat{u_R})\right)\widehat{\upsilon}\left(1 - \widehat{u}\right)\big)\widehat{w} \\
    &=  \big(\widehat{u} + \left(1 - \widehat{u}\right) - (\widehat{u_0} + 1 - \widehat{u_R})\widehat{\upsilon}\left(1 - \widehat{u}\right)\big)\widehat{w}\\
    &= \big(1 - (1 - \widehat{u_R})\widehat{\upsilon}\left(1 - \widehat{u}\right)\big)\widehat{w} \\
    &= \widehat{w} -  (1 - \widehat{u_R})\widehat{w}\widehat{\upsilon}\left(1 - \widehat{u}\right)
    = \widehat{w} .
\end{align*}
It now follows from \eqref{assoc} and \eqref{eq:2.1} that
\begin{align*}
    f\ast w
    &= f\ast \left(u + \sum_{j = 1}^N v_j\ast u_j\ast \left(\upsilon - \upsilon\ast u\right)\right)\ast w \\
    &= f\ast u\ast w + f\ast \left(\sum_{j = 1}^N v_j\ast u_j\ast \left(\upsilon - \upsilon\ast u\right)\right)\ast w \\
    &= f\ast u\ast w +  \sum_{j = 1}^N (f\ast v_j)\ast u_j\ast \left(\upsilon - \upsilon\ast u\right)\ast w = f\ast u\ast w.
\end{align*}
If $Z(Y) = \emptyset$, one can take $ u = 0$, and the equality $f = f\ast u$ means that $f = 0$.
\end{proof}


For a bounded set $E \subset \real^n$, let $\mathrm{conv}(E)$ denote its closed convex hull, and $H_E$ denote its support function:
\begin{gather*}
    H_E(y) :=  \sup_{\xi \in E}\, y\cdot\xi =  \sup_{\xi \in \mathrm{conv}(E)} y\cdot\xi, \quad y \in \real^n .
\end{gather*}
Clearly, $H_E$ is positively homogeneous and convex:  for   all $x,y\in\real^n$ and $\tau\geq 0$ we have
\begin{gather*}
    H_E(\tau y) = \tau H_E(y), \quad
    H_E(y + x)  \le H_E(y) + H_E(x) .
\end{gather*}
For every positively homogeneous convex function $H$,
\begin{equation}\label{CompConv}
    K := \left\{\xi \in \real^n \mid  y\cdot\xi \le H(y) \text{\ \ for all\ \ } y \in \real^n\right\}
\end{equation}
is the unique convex compact set such that $H_K = H$, see, e.g.\ \cite[Thm.~4.3.2]{H83}.
\begin{theorem}\label{Ent}
    Let $g$, $f$, and $Y$ satisfy the conditions of Theorem \ref{Taub}, and let
\begin{equation}\label{SuppF}
    \mathcal{H}_Y(y) := H_{Z(Y)}(-y) =  \sup_{\xi \in Z(Y)} (-y)\cdot\xi = -\inf_{\xi \in Z(Y)} y\cdot\xi  , \quad y \in \real^n .
\end{equation}
    Then $f$ admits analytic continuation to an entire function $f : \comp^n \to \comp$ such that for every multi-index $\alpha \in \mathbb{Z}_+^n$,
\begin{equation}\label{fentest}
    \left\|\left(\partial^\alpha f\right)(\cdot + iy)\right\|_{L_{g^{-1}}^\infty(\real^n)}
    \le C_{\alpha} e^{\mathcal{H}_Y(y) + S_g(|y|)|y|} \|f\|_{L_{g^{-1}}^\infty(\real^n)},
    \quad y \in \real^n,
\end{equation}
see \eqref{Sg}, \eqref{Jnfty}, where the constant $C_{\alpha}  \in (0, \infty)$ depends only on $\alpha$ and $g$.
\end{theorem}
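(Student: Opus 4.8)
The plan is to represent $f$ as a convolution $f=f\ast u_\epsilon$ for a one‑parameter family of auxiliary functions, to continue this convolution analytically to $\comp^n$ using the analytic continuability of $u_\epsilon$, and then to feed the resulting entire function back into Theorem~\ref{Tent}. Fix $\epsilon>0$ and put $V_\epsilon:=\{\xi\in\real^n:\operatorname{dist}(\xi,\mathrm{conv}(Z(Y)))<\epsilon\}$, an open neighbourhood of $Z(Y)$. Since $\widetilde g$ is again a locally bounded submultiplicative function satisfying the Beurling--Domar condition, \cite[Lemma~1.24]{D56} yields $u_\epsilon\in L^1_{\widetilde{g}}(\real^n)$ with $\widehat{u_\epsilon}$ continuous and compactly supported, $\widehat{u_\epsilon}=1$ on a neighbourhood of $Z(Y)$, and $\operatorname{supp}\widehat{u_\epsilon}\subseteq\overline{V_\epsilon}$. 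As $\widehat{u_\epsilon}\in L^1(\real^n)$ has compact support, $u_\epsilon(z):=(2\pi)^{-n}\int_{\real^n}e^{iz\cdot\xi}\widehat{u_\epsilon}(\xi)\,d\xi$ defines an entire function with $\log|u_\epsilon(z)|=O(|z|)$ for $|z|$ large; moreover $|u_\epsilon(x+it\omega)|\le(2\pi)^{-n}\|\widehat{u_\epsilon}\|_{L^1}e^{t\,H_{\operatorname{supp}\widehat{u_\epsilon}}(-\omega)}$, so by monotonicity and additivity of support functions and $H_{Z(Y)}=H_{\mathrm{conv}(Z(Y))}$,
\[
    \kappa_{u_\epsilon}(\omega)\le H_{\operatorname{supp}\widehat{u_\epsilon}}(-\omega)\le H_{\overline{V_\epsilon}}(-\omega)=H_{\mathrm{conv}(Z(Y))}(-\omega)+\epsilon=\mathcal H_Y(\omega)+\epsilon
\]
for every $\omega\in\Ss^{n-1}$, see \eqref{k2}, \eqref{SuppF}. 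By Theorem~\ref{Taub}, $f=f\ast u_\epsilon$.

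Applying Theorem~\ref{Tent} to $\varphi=u_\epsilon$ with the weight $\widetilde g$ in place of $g$ --- legitimate since $u_\epsilon$ is entire with $\log|u_\epsilon|=O(|z|)$ and $u_\epsilon|_{\real^n}\in L^1_{\widetilde{g}}(\real^n)$ (hence also in $L^1_{\widetilde{g}^{-1}}(\real^n)$, as $\widetilde g\ge1$), and $S_{\widetilde g}=S_g$, $C_{\widetilde g}=C_g$ by \eqref{breve} --- gives, with $\alpha=0$,
\[
    \|u_\epsilon(\cdot+iy)\|_{L^1_{\widetilde{g}}(\real^n)}\le C_g\,e^{(\kappa_{u_\epsilon}(y/|y|)+S_g(|y|))|y|}\,\|u_\epsilon\|_{L^1_{\widetilde{g}}(\real^n)},\qquad y\in\real^n,
\]
which is locally bounded in $y$ because $S_g(|y|)\le S_g(1)$, see \eqref{Sgdecr}. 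Using $|f(x-w)|\le\|f\|_{L^\infty_{g^{-1}}}g(x-w)\le\|f\|_{L^\infty_{g^{-1}}}g(x)\widetilde g(w)$ and the substitution $w=x-s$, the integral $F_\epsilon(z):=\int_{\real^n}f(s)\,u_\epsilon(z-s)\,ds$ converges absolutely for every $z=x+iy\in\comp^n$, with
\[
    |F_\epsilon(x+iy)|\le\|f\|_{L^\infty_{g^{-1}}}\,g(x)\,\|u_\epsilon(\cdot+iy)\|_{L^1_{\widetilde{g}}(\real^n)}.
\]
A routine argument (Morera's and Fubini's theorems applied one variable at a time, using the local boundedness just noted) shows $F_\epsilon$ is entire, while $F_\epsilon|_{\real^n}=f\ast u_\epsilon=f$. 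Since an entire function on $\comp^n$ is determined by its restriction to $\real^n$, all the $F_\epsilon$ coincide with a single entire function, which I denote again by $f$; the last two displays together with \eqref{expbdd} give $\log|f(z)|=O(|z|)$ for $|z|$ large.

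It remains to pin down the exponential type. Substituting $y=t\omega$ ($t>0$, $\omega\in\Ss^{n-1}$) into the last two displays yields
\[
    \frac{\log|f(x+it\omega)|}{t}\le\frac{\log\bigl(\|f\|_{L^\infty_{g^{-1}}}C_g\,g(x)\,\|u_\epsilon\|_{L^1_{\widetilde{g}}(\real^n)}\bigr)}{t}+\kappa_{u_\epsilon}(\omega)+S_g(t),
\]
so letting $t\to\infty$ (with $S_g(t)\to0$ by \eqref{Jnfty}) and taking the supremum over $x\in\real^n$ gives $\kappa_f(\omega)\le\kappa_{u_\epsilon}(\omega)\le\mathcal H_Y(\omega)+\epsilon$; as $\epsilon>0$ was arbitrary, $\kappa_f(\omega)\le\mathcal H_Y(\omega)$ for every $\omega\in\Ss^{n-1}$, see \eqref{k2}. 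Finally, apply Theorem~\ref{Tent} with the weight $g$, $p=\infty$ and $\varphi=f$ (taking all the signs in \eqref{entest} to be $-$, which requires only $f|_{\real^n}\in L^\infty_{g^{-1}}(\real^n)$): the resulting estimate, combined with $\kappa_f(y/|y|)\,|y|\le\mathcal H_Y(y/|y|)\,|y|=\mathcal H_Y(y)$ (positive homogeneity of $\mathcal H_Y$), is exactly \eqref{fentest} for $y\neq0$; for $y=0$ it is the $y=0$ case of Theorem~\ref{Tent}, since $\mathcal H_Y(0)=0$. (If $Z(Y)=\emptyset$ then $f=0$ by Theorem~\ref{Taub} and there is nothing to prove.)

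The only genuinely non‑routine ingredient is the construction of $u_\epsilon$ in the first paragraph: a function in $L^1_{\widetilde{g}}(\real^n)$ whose Fourier transform is a compactly supported localiser squeezed into an arbitrarily thin neighbourhood of $\mathrm{conv}(Z(Y))$. This is precisely where the Beurling--Domar condition cannot be dispensed with --- it is what makes the weighted convolution algebra $L^1_{\widetilde{g}}(\real^n)$ regular enough to support \cite[Lemma~1.24]{D56} --- and it is also the mechanism that forces the exponential type of the continuation down to $\mathcal H_Y$. Everything else is bookkeeping with Theorem~\ref{Tent} and the submultiplicativity of $g$.
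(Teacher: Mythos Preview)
Your proof is correct and follows essentially the same route as the paper's: construct $u_\epsilon\in L^1_{\widetilde g}(\real^n)$ with $\widehat{u_\epsilon}$ supported in a thin neighbourhood of $Z(Y)$ via \cite[Lemma~1.24]{D56}, use Theorem~\ref{Taub} to write $f=f\ast u_\epsilon$, continue this analytically via the Paley--Wiener type bound on $u_\epsilon$, show $\kappa_f\le\mathcal H_Y$ by letting $\epsilon\downarrow0$, and then invoke Theorem~\ref{Tent}. The only cosmetic differences are that the paper cites Paley--Wiener--Schwartz explicitly and passes through Corollary~\ref{ent} rather than Theorem~\ref{Tent} for the $L^1_{\widetilde g}$ estimate on $u_\epsilon$; your direct use of Theorem~\ref{Tent} (with the ``$+$'' sign and weight $\widetilde g$) is equivalent and arguably slightly cleaner, since the $S_g$ term vanishes in the $t\to\infty$ limit anyway.
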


\begin{proof}
Take any $\varepsilon > 0$. There exists $u \in L^1_{\widetilde{g}}(\real^n)$ such that $\widehat{u} = 1$ in a neighbourhood of $Z(Y)$, and $\widehat{u} = 0$ outside the $\frac{\varepsilon}{2}$-neighbourhood of $Z(Y)$, see \cite[Lemma 1.24]{D56}. It follows from the Paley--Wiener--Schwartz theorem, see, e.g.\ \cite[Thm.~7.3.1]{H83} that $u = \mathcal{F}^{-1} \widehat{u}$ admits analytic continuation to an entire function $u : \comp^n \to \comp$ satisfying the estimate
\begin{gather*}
    |u(x + iy)| \le c_\varepsilon e^{\mathcal{H}_Y(y) + \varepsilon|y|/2} \quad\text{for all}\quad x, y \in \real^n
\end{gather*}
with some constant $c_\varepsilon \in (0, \infty)$. So, $u$ satisfies the conditions of Corollary \ref{ent} with $\widetilde{g}$ in place of $g$, and
\begin{equation}\label{uentest}
    \|u(\cdot + iy)\|_{L_{\widetilde{g}}^1(\real^n)}
    \le C_{0, \varepsilon/2}\, e^{\mathcal{H}_Y(y) + \varepsilon|y|} \|u\|_{L_{\widetilde{g}}^1(\real^n)},
    \quad  y \in \real^n .
\end{equation}
Since
\begin{gather*}
    f(x) = \int_{\real^n} u(x - s) f(s)\, ds,
\end{gather*}
see Theorem \ref{Taub}, $f$ admits analytic continuation
\begin{gather*}
    f(x + iy) := \int_{\real^n} u(x + iy - s) f(s)\, ds,
\end{gather*}
see Corollary \ref{ent}, and
\begin{align*}
    \|f(\cdot + iy)\|_{L_{g^{-1}}^\infty(\real^n)}
    &\le \|u(\cdot + iy)\|_{L_{\widetilde{g}}^1(\real^n)} \|f\|_{L_{g^{-1}}^\infty(\real^n)} \\
    &\le C_{0, \varepsilon/2}\, e^{\mathcal{H}_Y(y) + \varepsilon|y|} \|u\|_{L_{\widetilde{g}}^1(\real^n)} \|f\|_{L_{g^{-1}}^\infty(\real^n)}\\
    &=: M_\varepsilon e^{\mathcal{H}_Y(y) + \varepsilon|y|} \|f\|_{L_{g^{-1}}^\infty(\real^n)},
\end{align*}
see \eqref{ggg}. Since
\begin{gather*}
    \frac{|f(x + iy)|}{g(x)}
    \le M_\varepsilon e^{\mathcal{H}_Y(y) + \varepsilon|y|} \|f\|_{L_{g^{-1}}^\infty(\real^n)} ,
\end{gather*}
one has $\log{|f(x + iy)|} = O(|x + iy|)$ for $|x + iy|$ large, see \eqref{expbdd}, and
\begin{align*}
    \limsup_{0 < t \to \infty} \frac{\log{|f(x + i t\omega)|}}{t}
    &\le \limsup_{0 < t \to \infty} \frac{\log\left(M_\varepsilon g(x) \|f\|_{L_{g^{-1}}^\infty(\real^n)}\right) + t\mathcal{H}_Y(\omega) + \varepsilon t}{t} \\
    &= \mathcal{H}_Y(\omega) + \varepsilon .
\end{align*}
Hence,
\begin{gather*}
    \kappa_f(\omega) := \sup_{x \in \real^n}\left(\limsup_{0 < t \to \infty} \frac{\log{|f(x + i t\omega)|}}{t}\right) \le \mathcal{H}_Y(\omega) + \varepsilon
\end{gather*}
for every $\varepsilon > 0$, i.e.
\begin{gather*}
    \kappa_f(\omega) \le  \mathcal{H}_Y(\omega) .
\end{gather*}
So, \eqref{fentest} follows from Theorem \ref{Tent}.
\end{proof}

\begin{theorem}\label{compZ}
    Let $g : \real^n\to [1, \infty)$ be a locally bounded, measurable submultiplicative function satisfying the Beurling--Domar condition \eqref{B-D}, and  let $m\in C(\real^n)$ be such that the Fourier multiplier operator
    \begin{gather*}
        C_c^\infty(\real^n)\ni \phi \mapsto \widetilde m(D)\phi :=\mathcal{F}^{-1}(\widetilde m\widehat\phi)
    \end{gather*}
    maps $C_c^\infty(\real^n)$ into $L^1_g(\real^n)$. Suppose $f \in L^\infty_{g^{-1}}(\real^n)$ is such that $m(D)f=0$ as a distribution, i.e.\
    \begin{gather}\label{eq:3.3}
        \langle f, \widetilde m(D)\phi\rangle = 0 \quad\text{for all\ } \phi \in C_c^\infty(\real^n).
    \end{gather}
    If $K := \left\{\eta \in \real^n \mid  m(\eta) = 0\right\}$ is compact, then $f$ admits analytic continuation to an entire function $f : \comp^n \to \comp$ such that for every multi-index $\alpha \in \mathbb{Z}_+^n$,
    \begin{equation}\label{Hentest}
        \left\|\left(\partial^\alpha f\right)(\cdot + iy)\right\|_{L_{g^{-1}}^\infty(\real^n)}
        \le C_{\alpha} e^{H(y) + S_g(|y|)|y|} \|f\|_{L_{g^{-1}}^\infty(\real^n)},
        \quad y \in \real^n,
    \end{equation}
    see \eqref{Sg}, \eqref{Jnfty}, where $H(y) := H_K(-y)$, and the constant $C_{\alpha}  \in (0, \infty)$ depends only on $\alpha$ and $g$.

    Conversely, if every $f \in L^\infty(\real^n)$ satisfying \eqref{eq:3.3} admits analytic continuation to an entire function $f : \comp^n \to \comp$ such that
    \begin{equation}\label{fentest1}
        \|f(\cdot + iy)\|_{L_{g^{-1}}^\infty(\real^n)}
        \le M_\varepsilon e^{H(y) + \varepsilon|y|} \|f\|_{L_{g^{-1}}^\infty(\real^n)},
        \quad  y \in \real^n ,
    \end{equation}
    holds for every $\varepsilon > 0$ with a constant $M_\varepsilon \in (0, \infty)$ that depends only on $\varepsilon$, $m$, and $g$, then $\left\{\eta \in \real^n \mid  m(\eta) = 0\right\} \subseteq K$, where $K$ is the unique convex compact set such that $H_K(y) = H(-y)$; cf.\ \eqref{CompConv}.
\end{theorem}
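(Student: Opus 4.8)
The plan is to apply the hypothesis to the bounded exponential solutions and read off the inclusion by a homogeneity/scaling argument. Fix an arbitrary $\eta_0\in\real^n$ with $m(\eta_0)=0$ and put $f_0(x):=e^{i\eta_0\cdot x}$, so that $f_0\in L^\infty(\real^n)$. The first thing to check is that $f_0$ satisfies \eqref{eq:3.3}. For $\phi\in C_c^\infty(\real^n)$ the function $\widetilde m(D)\phi=\mathcal F^{-1}(\widetilde m\,\widehat\phi)$ lies in $L^1_g(\real^n)\subseteq L^1(\real^n)$ by assumption, so $\langle f_0,\widetilde m(D)\phi\rangle=\int_{\real^n}e^{i\eta_0\cdot x}\,\widetilde m(D)\phi(x)\,dx$ is an ordinary integral and equals $\mathcal F\big(\widetilde m(D)\phi\big)(-\eta_0)$. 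Since $\widetilde m(D)\phi\in L^1$, its Fourier transform is a continuous function, and as a tempered distribution it equals $\widetilde m\,\widehat\phi$, which is continuous because $m\in C(\real^n)$ and $\widehat\phi$ is a Schwartz function; hence the two coincide pointwise. Therefore $\langle f_0,\widetilde m(D)\phi\rangle=\widetilde m(-\eta_0)\,\widehat\phi(-\eta_0)=m(\eta_0)\,\widehat\phi(-\eta_0)=0$, i.e.\ $f_0$ solves $m(D)f=0$ in the sense of \eqref{eq:3.3}, and the hypothesis applies to it.

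Next, since $\real^n$ is a uniqueness set for entire functions on $\comp^n$ (argue one variable at a time, using that $\real$ is a uniqueness set in $\comp$), the entire continuation of $f_0$ furnished by the hypothesis can only be $f_0(z)=e^{i\eta_0\cdot z}$, which is entire with $\log|f_0(z)|=-\eta_0\cdot\Im z=O(|z|)$. For $z=x+iy$ one has $|f_0(x+iy)|=e^{-\eta_0\cdot y}$; since $g\ge 1$ is locally bounded, $c_0:=\|g^{-1}\|_{L^\infty(\real^n)}\in(0,\infty)$, whence $\|f_0\|_{L^\infty_{g^{-1}}}=c_0$ and $\|f_0(\cdot+iy)\|_{L^\infty_{g^{-1}}}=c_0\,e^{-\eta_0\cdot y}$. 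Substituting these into \eqref{fentest1} and cancelling $c_0$: for each $\varepsilon>0$ there is $M_\varepsilon<\infty$ with $-\eta_0\cdot y\le \log M_\varepsilon+H(y)+\varepsilon|y|$ for all $y\in\real^n$. Replacing $y$ by $ty$ ($t>0$), using $H(ty)=tH(y)$ and $|ty|=t|y|$, dividing by $t$ and letting $t\to\infty$ removes $\log M_\varepsilon$; then letting $\varepsilon\downarrow 0$ gives $-\eta_0\cdot y\le H(y)$ for every $y$. Since $H_K(y)=H(-y)$, replacing $y$ by $-y$ yields $\eta_0\cdot y\le H_K(y)$ for all $y\in\real^n$, so by the support-function description \eqref{CompConv} of $K$ we obtain $\eta_0\in K$. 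As $\eta_0$ was an arbitrary zero of $m$, this proves $\left\{\eta\in\real^n\mid m(\eta)=0\right\}\subseteq K$.

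I expect the only delicate point to be the verification in the first paragraph that $f_0$ genuinely solves \eqref{eq:3.3}: $e^{i\eta_0\cdot x}$ is not integrable, so Fubini is unavailable and the computation has to be routed through the identity $\mathcal F\big(\widetilde m(D)\phi\big)=\widetilde m\,\widehat\phi$ of \emph{continuous} functions (using that an $L^1$ function has continuous Fourier transform and that $\widetilde m\,\widehat\phi$ is continuous). The remaining ingredients — uniqueness of the analytic continuation and the scaling-and-limit argument — are routine.
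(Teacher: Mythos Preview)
Your proposal addresses only the converse implication of the theorem; the forward direction --- that compactness of $K=\{m=0\}$ forces every solution $f\in L^\infty_{g^{-1}}$ of \eqref{eq:3.3} to extend to an entire function satisfying \eqref{Hentest} --- is not touched at all. In the paper this is the substantial half: one first rewrites \eqref{eq:3.3} as a convolution condition $f\ast v=0$ for all $v$ in the subspace $Y:=\big\{\widetilde{\widetilde m(D)\phi}\ \big|\ \phi\in C_c^\infty(\real^n)\big\}\subset L^1_{\widetilde g}(\real^n)$, verifies that $Z(Y)=K$, and then applies Theorem~\ref{Ent} (which in turn rests on the Tauberian Theorem~\ref{Taub} and the Phragm\'en--Lindel\"of type estimates of Section~\ref{Ch3}). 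None of this is routine, and without it the forward statement is simply unproved.

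For the converse that you do treat, your argument is correct and essentially the same as the paper's: both test the hypothesis against the bounded solution $e^{i\eta_0\cdot x}$ for a zero $\eta_0$ of $m$ and compare the exact growth of its complex translates with the bound \eqref{fentest1}. The paper argues by contradiction (pick $\gamma\notin K$, a separating direction $y_0$, a small enough $\varepsilon$, and let $\tau\to\infty$ along $-\tau y_0$), whereas you argue directly by scaling $y\mapsto ty$, dividing by $t$, and letting $t\to\infty$ followed by $\varepsilon\downarrow 0$; the content is identical. Your verification that $e^{i\eta_0\cdot x}$ actually satisfies \eqref{eq:3.3}, via the pointwise identity $\mathcal F(\widetilde m(D)\phi)=\widetilde m\,\widehat\phi$ of continuous functions, is in fact a little more careful than the paper's one-line assertion $m(D)e^{ix\cdot\gamma}=e^{ix\cdot\gamma}m(\gamma)=0$.
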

\begin{proof}
Denote by $(T_\upsilon \phi)(x) := \phi(x - \upsilon)$, $x, \upsilon \in \real^n$ the shift by $\upsilon$. Since $T_\upsilon \phi \in C_c^\infty(\real^n)$ for every $\phi\in C_c^\infty(\real^n)$ and all $\upsilon \in \real^n$, it follows from \eqref{eq:3.3} that
\begin{gather*}
    \big(f \ast \widetilde{\widetilde{m}(D) \phi}\big)(\upsilon)
    = \langle f, T_\upsilon\widetilde{m}(D) \phi\rangle
    = \langle f, \widetilde{m}(D) \left(T_\upsilon\phi\right)\rangle
    = 0 \quad\text{for all\ } \upsilon \in \real^n .
\end{gather*}
Hence,
\begin{gather*}
    f \ast \widetilde{\widetilde{m}(D) \phi} = 0 \quad\text{for all\ } \phi \in C_c^\infty(\real^n) .
\end{gather*}
It is easy to see that
\begin{align*}
    \bigcap_{\phi \in C_c^\infty(\real^n)} \bigg\{\eta \in \real^n \mid \widehat{\widetilde{\widetilde{m}(D) \phi}}(\eta) = 0\bigg\}
    &= \bigcap_{\phi \in C_c^\infty(\real^n)} \bigg\{\eta \in \real^n \mid  \widehat{\widetilde{m}(D) \phi}(-\eta) = 0\bigg\} \\
    &= \bigcap_{\phi \in C_c^\infty(\real^n)} \left\{\eta \in \real^n \mid  m(\eta) \widehat{\phi}(-\eta) = 0\right\}\\
    &= \left\{\eta \in \real^n \mid m(\eta) = 0\right\} = K .
\end{align*}
Applying Theorem \ref{Ent} with
\begin{gather*}
    Y := \left\{\widetilde{\widetilde m(D) \phi}\ \big| \ \phi \in C_c^\infty(\real^n)\right\} \subset L_{\widetilde g}^1(\real^n)
\end{gather*}
and $Z(Y) = K$, one gets \eqref{Hentest}.

For the converse direction, we assume the contrary, i.e.\ that the zero-set $\{\eta\in\real^n \mid m(\eta)=0\}$ contains some $\gamma\not\in K$, see \eqref{CompConv}. Then there exists a $y_0 \in \real^n\setminus\{0\}$ such that $y_0\cdot \gamma > H_K(y_0) = H(-y_0)$. It is easy to see that $f(x) := e^{ix\cdot\gamma}$ satisfies $m(D)e^{ix\cdot\gamma} = e^{ix\cdot\gamma} m(\gamma) = 0$ for all $x\in\real^n$. Take $\varepsilon < (y_0\cdot \gamma - H(-y_0))/|y_0|$. Clearly, $f \in L^\infty(\real^n)$, and
\begin{align*}
    \frac{\|f(\cdot - i\tau y_0)\|_{L_{g^{-1}}^\infty(\real^n)}}{e^{H(-\tau y_0) + \varepsilon|\tau y_0|} \|f\|_{L_{g^{-1}}^\infty(\real^n)}}
    = \frac{e^{\tau(y_0\cdot \gamma)}}{e^{\tau(H(-y_0) + \varepsilon|y_0|)}}
    = e^{\tau(y_0\cdot \gamma - H(-y_0) - \varepsilon|y_0|)}  \xrightarrow[\tau\to\infty]{} \infty.
\end{align*}
So, $f$ does not satisfy \eqref{fentest1}.
\end{proof}

\begin{corollary}\label{th:3.1}
    Let $g : \real^n\to [1, \infty)$ be a locally bounded, measurable submultiplicative function satisfying the Beurling--Domar condition \eqref{B-D} and  let $m\in C(\real^n)$ be such that the Fourier multiplier operator
    \begin{gather*}
        C_c^\infty(\real^n)\ni \phi \mapsto \widetilde m(D)\phi :=\mathcal{F}^{-1}(\widetilde m\widehat\phi)
    \end{gather*}
    maps $C_c^\infty(\real^n)$ into $L^1_g(\real^n)$. Suppose $f \in L^\infty_{g^{-1}}(\real^n)$ is such that $m(D)f=0$ as a distribution, i.e.\ \eqref{eq:3.3} holds. If $\left\{\eta \in \real^n \mid  m(\eta) = 0\right\} = \{0\}$, then $f$ admits analytic continuation to an entire function $f : \comp^n \to \comp$ such that for every multi-index $\alpha \in \mathbb{Z}_+^n$,
\begin{equation}\label{Hentest0}
    \left\|\left(\partial^\alpha f\right)(\cdot + iy)\right\|_{L_{g^{-1}}^\infty(\real^n)}
    \le C_{\alpha} e^{S_g(|y|)|y|} \|f\|_{L_{g^{-1}}^\infty(\real^n)},
    \quad y \in \real^n ,
\end{equation}
    where the constant $C_{\alpha}  \in (0, \infty)$ depends only on $\alpha$ and $g$. If $\left\{\eta \in \real^n \mid  m(\eta) = 0\right\} = \emptyset$, then $f = 0$.

    Conversely, if every $f \in L^\infty(\real^n)$ satisfying \eqref{eq:3.3} admits analytic continuation to an entire function $f : \comp^n \to \comp$ such that
\begin{equation}\label{fentest0}
    \|f(\cdot + iy)\|_{L_{g^{-1}}^\infty(\real^n)}
    \le M_\varepsilon e^{\varepsilon|y|} \|f\|_{L_{g^{-1}}^\infty(\real^n)},
    \quad  y \in \real^n ,
\end{equation}
    holds for every $\varepsilon > 0$ with a constant $M_\varepsilon \in (0, \infty)$ that depends only on $\varepsilon$, $m$, and $g$, then $\left\{\eta \in \real^n \mid  m(\eta) = 0\right\} \subseteq \{0\}$.
\end{corollary}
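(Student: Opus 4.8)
The plan is to derive the Corollary directly from Theorem \ref{compZ} together with the final assertion of Theorem \ref{Taub}: the hypotheses of the Corollary are \emph{verbatim} those of Theorem \ref{compZ}, and the assumption $\left\{\eta \in \real^n \mid m(\eta) = 0\right\} = \{0\}$ (resp.\ $= \emptyset$) is just the special case of a compact zero set whose closed convex hull is $\{0\}$ (resp.\ is empty).

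First, suppose $\left\{\eta \in \real^n \mid m(\eta) = 0\right\} = \{0\}$. This set is compact, so Theorem \ref{compZ} applies with $K = \{0\}$. The support function of $K = \{0\}$ vanishes identically, $H_K \equiv 0$, hence $H(y) = H_K(-y) = 0$ for all $y \in \real^n$, and the exponential factor $e^{H(y) + S_g(|y|)|y|}$ in \eqref{Hentest} becomes $e^{S_g(|y|)|y|}$; thus \eqref{Hentest} is exactly \eqref{Hentest0}, and $f$ has the asserted analytic continuation.

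Next, suppose $\left\{\eta \in \real^n \mid m(\eta) = 0\right\} = \emptyset$. Here the support-function formulation of Theorem \ref{compZ} is not directly available (the support function of the empty set is $-\infty$), so I would argue as in the opening of the proof of Theorem \ref{compZ}: shifting $\phi$ in \eqref{eq:3.3} gives $f \ast \widetilde{\widetilde m(D)\phi} = 0$ for all $\phi \in C_c^\infty(\real^n)$, and the computation there identifies $Z(Y) = \left\{\eta \in \real^n \mid m(\eta) = 0\right\}$ for $Y := \left\{\widetilde{\widetilde m(D)\phi} \mid \phi \in C_c^\infty(\real^n)\right\} \subset L^1_{\widetilde g}(\real^n)$. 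Since now $Z(Y) = \emptyset$ (in particular bounded), the last sentence of Theorem \ref{Taub} yields $f = 0$, which is trivially an entire function obeying all stated estimates.

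For the converse, assume every $f \in L^\infty(\real^n)$ satisfying \eqref{eq:3.3} admits an entire continuation obeying \eqref{fentest0}; this is precisely \eqref{fentest1} with $H \equiv 0$. By \eqref{CompConv}, the unique convex compact set $K$ with $H_K(y) = H(-y) = 0$ for all $y$ is $K = \left\{\xi \in \real^n \mid y\cdot\xi \le 0 \text{ for all } y \in \real^n\right\} = \{0\}$, so the converse direction of Theorem \ref{compZ} gives $\left\{\eta \in \real^n \mid m(\eta) = 0\right\} \subseteq \{0\}$. I do not anticipate a genuine obstacle; the only points needing care are (i) noticing that the empty-zero-set case lies outside the scope of Theorem \ref{compZ} and must be routed through the final clause of Theorem \ref{Taub}, and (ii) keeping the sign conventions in $H(y) = H_K(-y)$ and $\mathcal H_Y(y) = H_{Z(Y)}(-y)$ straight so that the vanishing of $H$ is correctly identified with $K = \{0\}$.
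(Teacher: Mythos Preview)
Your proposal is correct and follows essentially the same route as the paper's own proof: reduce everything to Theorem \ref{compZ} with $K=\{0\}$ (so $H\equiv 0$), and handle the empty-zero-set case separately by invoking the final clause of Theorem \ref{Taub} with the same subspace $Y$ as in the proof of Theorem \ref{compZ}. The paper's proof is slightly terser, but the logic and the two key observations (that $H_{\{0\}}\equiv 0$ and that the $\emptyset$ case must be routed through Theorem \ref{Taub}) are identical to yours.
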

\begin{proof}
    The only part that does not follow immediately from Theorem \ref{compZ} is that $f = 0$ in the case $\left\{\eta \in \mathbb{R}^n \mid  m(\eta) = 0\right\} = \emptyset$. In this case, one can take the same $Y$ as in the proof of Theorem \ref{compZ}, note that $Z(Y) = \emptyset$ and apply  Theorem \ref{Taub}  to conclude that $f = 0$. (It is instructive to compare this result to \cite[Proposition 2.2]{Kan00}.)
\end{proof}


\begin{remark}
    The condition that $\widetilde{m}(D)$ maps $C^\infty_c(\real^n)$ to $L^1_g(\real^n)$ is satisfied if $m$ is a linear combination of terms of the form $ab$, where $a = F\mu$,\ $\mu$ is a finite complex Borel measure on $\real^n$ such that
    \begin{gather*}
        \int_{\real^n} \widetilde{g}(y)\, |\mu|(dy)<\infty,
    \end{gather*}
    and $b$ is the Fourier transform of a compactly supported distribution. Indeed, it is easy to see that $\widetilde{b}(D)$ maps $C^\infty_c(\real^n)$ into itself, while the convolution operator $\phi \mapsto \widetilde{\mu}\ast\phi$ maps $C^\infty_c(\real^n)$ to $L^1_g(\real^n)$.

    A particular example is the characteristic exponent of a L\'evy process  (this is a stochastic process with stationary and independent increments, such that the trajectories are right-continuous with finite left limits, see e.g.\ Sato~\cite{sato})
    \begin{align*}
    m(\xi)
    &= -ib\cdot\xi + \frac 12 \xi\cdot Q\xi + \int_{0<|y|<1} \left(1-e^{iy\cdot\xi} + iy\cdot\xi\right) \nu(dy)  \\
    &\quad\mbox{} + \int_{|y|\geq 1} \left(1-e^{iy\cdot\xi}\right) \nu(dy) ,
    \end{align*}
    where $b\in\mathbb{R}^n$, $Q\in\mathbb{R}^{n\times n}$ is a symmetric positive semidefinite matrix, and $\nu$ is a measure on $\mathbb{R}^n\setminus\{0\}$ such that $\int_{0<|y|<1} |y|^{2}\,\nu(dy) + \int_{|y|\geq 1}g(y)\,\nu(dy)<\infty$. More generally, one can take
    \begin{align*}
        m(\xi)
        &= \sum_{|\alpha|=0}^{2s} c_\alpha \frac{i^{|\alpha|}}{\alpha!} \xi^\alpha
        + \int_{0<|y|<1}\bigg[1-e^{iy\cdot\xi}+\sum_{|\alpha|=0}^{2s-1}\frac{i^{|\alpha|}}{\alpha!} y^\alpha\xi^\alpha\bigg]\,\nu(dy) \\
        &\quad\mbox{} + \int_{|y|\geq 1} \left(1-e^{iy\cdot\xi}\right)\nu(dy)
    \end{align*}
    with $s\in\mathbb{N}$, $c_\alpha\in\mathbb{R}$, and a measure $\nu$ on $\mathbb{R}^n\setminus\{0\}$ such that $\int_{0<|y|<1} |y|^{2s}\,\nu(dy) + \int_{|y|\geq 1}g(y)\,\nu(dy)<\infty$. (As usual, for any $\alpha\in\mathbb{N}_0^n$ and $\xi \in\mathbb{R}^n$, we define $\alpha!:= \prod_1^n \alpha_k!$ and $\xi^\alpha := \prod_1^n \xi_k^{\alpha_k}$.) Functions of this type appear naturally in positivity questions related to generalised functions (see, e.g.~\cite[Ch.~II, \S 4]{gel-vil64} or \cite[Ch.~8]{wendland}). Some authors call the function $-m$ for such an $m$ (under suitable additional conditions on the $c_\alpha$'s) a \emph{conditionally positive definite function}.
\end{remark}

\begin{remark}\label{cases}
We are mostly interested in super-polynomially growing weights as polynomially growing ones have been dealt with in our previous paper \cite{BSS23}. Nevertheless, it is instructive to look at the behaviour of the factor $e^{S_g(|y|)|y|}$ for  typical  super-polynomially, polynomially, and sub-polynomially growing weights.

It follows from \eqref{gamma} that if $g(x) = e^{|x|/\log^\gamma(e + |x|)}$, $\gamma > 1$, then there exists a constant $C_\gamma$ such that
\begin{align*}
    e^{S_g(|y|)|y|}
    &\le C_\gamma e^{\frac{1}{\pi} |y| \log^{-\gamma}(e + |y|)\left(1 + \frac{2}{\gamma - 1} \log(e + |y|)\right)} \\
    &= C_\gamma \left(e^{|y|/\log^\gamma(e + |y|)}\right)^{\frac{1}{\pi} \left(1 + \frac{2}{\gamma - 1} \log(e + |y|)\right)}\\
    &= C_\gamma (g(y))^{\frac{1}{\pi} \left(1 + \frac{2}{\gamma - 1} \log(e + |y|)\right)}  .
\end{align*}
Similarly, if $g(x)  = e^{a |x|^b}$, $a \ge 0$, $b \in [0, 1)$, then \eqref{ab} implies
\begin{equation}\label{gab}
    e^{S_g(|y|)|y|} = e^{a |y|^{b}\left(\sin\left(\frac{1 - b}{2} \pi\right)\right)^{-1}}
    = (g(y))^{\left(\sin\left(\frac{1 - b}{2} \pi\right)\right)^{-1}} .
\end{equation}
If $g(x)  = (1 + |x|)^s$, $s \ge 0$, then \eqref{s} implies
\begin{equation}\label{gpol}
    e^{S_g(|y|)|y|} \le e^{c_1 s + s \log(1 + |y|)}
    = C_s (1 + |y|)^{s} = C_s\, g(y) .
\end{equation}
Finally, if $g(x)  = (\log(e + |x|))^t$, $t \ge 0$, then \eqref{t} implies
\begin{gather*}
    e^{S_g(|y|)|y|} \le  e^{c_2 t + t \log\log(e + |y|)}
    = C_t (\log(e + |y|))^{t} = C_t\, g(y)  .
\end{gather*}
\end{remark}

\begin{remark}
If $g$ is polynomially bounded in Corollary \ref{th:3.1}, then it follows from \eqref{Hentest0} and \eqref{gpol} that $f$ is a polynomially bounded entire function on $\comp^n$, hence a polynomial, see, e.g.\ \cite[Cor.~1.7]{LG86}. The fact that $f$ is a polynomial in this case was established in \cite{BSS23} and \cite{GK23}.
\end{remark}

\begin{remark}\label{trivial}
Let $n = 2$, $g(x) := (1 + |x|)^k$, $k \in \nat$, $f(x_1, x_2) := (x_1 + ix_2)^k$ (or $f(x_1, x_2) := (x_1 + ix_2)^k + (x_1 - ix_2)^k$ if one prefers to have a real-valued $f$). Then $f \in L^\infty_{g^{-1}}(\real^2)$, $\Delta f = 0$, $f(x + iy_1 \mathbf{e}_1)  = (x_1 + iy_1 + ix_2)^k$ for any $y_1 \in \real$, see \eqref{ej}, and
\begin{gather*}
    \frac{\|f(\cdot + iy_1 \mathbf{e}_1)\|_{L_{g^{-1}}^\infty(\real^2)}}{g(y_1 \mathbf{e}_1)}
    \ge \frac{|y_1|^k}{(1 + |y_1|)^k} \xrightarrow[|y_1| \to \infty]{} 1
    = \|f\|_{L_{g^{-1}}^\infty(\real^2)}.
\end{gather*}
So,  the factor $e^{S_g(|y|)|y|} \le C_k\, g(y)$, see \eqref{gpol}, is optimal in \eqref{Hentest0} in this case.

\bigskip
The case $g(x)  = e^{a |x|^b}$, $a > 0$, $b \in [0, 1)$, is perhaps more interesting. Let us take $b = \frac12$. Then it follows from \eqref{gab} that
$e^{S_g(|y|)|y|} = (g(y))^{\sqrt{2}}$.  Let us show that one cannot replace this factor in \eqref{Hentest0} with $(g(y))^{\sqrt{2}\,(1 - \varepsilon)}$, $\varepsilon > 0$. Take any $\varepsilon > 0$. Since
\begin{gather*}
    \sqrt[4]{1 + \tau^2}\, \cos\left(\frac12\arctan{\frac1\tau}\right)
    \xrightarrow[\tau \to 0, \; \tau > 0]{} \frac{1}{\sqrt{2}},
\intertext{there exists some $\tau_\varepsilon > 0$ such that}
    \sqrt[4]{1 + \tau_\varepsilon^2}\, \cos\left(\frac12\arctan{\frac1\tau_\varepsilon}\right)
    \le  \frac{1 + \varepsilon}{\sqrt{2}}.
\end{gather*}
Let us estimate $\Re\sqrt{x_1 + i\kappa x_2}$, where $x = (x_1, x_2) \in \real^2$,  $\kappa > 0$ is a constant to be chosen later, and $\sqrt{\cdot}$ is the branch of the square root that is analytic in $\comp\setminus (-\infty, 0]$ and positive on $(0, +\infty)$. If $x_1 \ge \tau_\epsilon \kappa |x_2|$, then
\begin{align*}
    \Re\sqrt{x_1 + i\kappa x_2}
    \le \left|\sqrt{x_1 + i\kappa x_2}\right|
    = \sqrt[4]{x_1^2 + \kappa^2x_2^2}
    &\le \sqrt[4]{\left(1 + \frac{1}{\tau_\varepsilon^2}\right)x_1^2} \\
    &\le \left(1 + \frac{1}{\tau_\varepsilon^2}\right)^{1/4} \sqrt{x_1} \\
    &\le \left(1 + \frac{1}{\tau_\varepsilon^2}\right)^{1/4} \sqrt{|x|}.
\end{align*}
If $0 < x_1 < \tau_\epsilon \kappa |x_2|$, then
\begin{align*}
    \Re\sqrt{x_1 + i\kappa x_2}
    & =  \left|\sqrt{x_1 + i\kappa x_2}\right| \cos\left(\frac12\arctan{\frac{\kappa |x_2|}{x_1}}\right) \\
    &\le \left|\sqrt{\tau_\epsilon \kappa |x_2| + i\kappa x_2}\right| \cos\left(\frac12\arctan{\frac1\tau_\varepsilon}\right) \\
    &= \kappa^{1/2} |x_2|^{1/2}\, \sqrt[4]{1 +\tau^2_\varepsilon}\, \cos\left(\frac12\arctan{\frac1\tau_\varepsilon}\right)\\
    &\le \frac{1 + \varepsilon}{\sqrt{2}} \kappa^{1/2} |x|^{1/2}.
\end{align*}
Now, take $\kappa_\varepsilon \ge 1$ such that
\begin{gather*}
    \frac{1 + \varepsilon}{\sqrt{2}} \kappa_\varepsilon^{1/2}
    \ge \left(1 + \frac{1}{\tau_\varepsilon^2}\right)^{1/4} .
\end{gather*}
Then
\begin{equation}\label{sqrt}
    \Re\sqrt{x_1 + i\kappa_\varepsilon x_2}
    \le \frac{1 + \varepsilon}{\sqrt{2}} \kappa_\varepsilon^{1/2} |x|^{1/2}
\end{equation}
for $x_1 > 0$. If $x_1 \le 0$, then the argument of $\sqrt{x_1 + i\kappa_\varepsilon x_2}$ belongs to $\pm [\pi/4, \pi/2]$, depending on the sign of $x_2$. Hence,
\begin{align*}
    \Re\sqrt{x_1 + i\kappa_\varepsilon x_2} \le \left|\sqrt{x_1 + i\kappa_\varepsilon x_2}\right| \cos{\frac{\pi}{4}}
    \le \frac{1}{\sqrt{2}} \kappa_\varepsilon^{1/2} |x|^{1/2} ,
\end{align*}
and \eqref{sqrt} holds for all $x = (x_1, x_2) \in \real^2$.

Since the Taylor series of $\cos{w}$ contains only even powers of $w$, $\cos(i\sqrt{z})$ is an analytic function of $z \in \comp$. So, $\cos(i\sqrt{x_1 + ix_2})$ is a harmonic function of $x = (x_1, x_2) \in \real^2$. Hence $f(x_1, x_2) := \cos(i\sqrt{x_1 + i\kappa_\varepsilon x_2})$ is a solution of the elliptic partial differential equation
\begin{gather*}
    \left(\partial^2_{x_1} + \frac{1}{\kappa_\varepsilon^2}\, \partial^2_{x_2}\right) f(x_1, x_2) = 0 .
\end{gather*}
It follows from \eqref{sqrt} that
\begin{gather*}
    |f(x_1, x_2)|
    \le \frac12\left(1 + e^{\Re\sqrt{x_1 + i\kappa_\varepsilon x_2}}\right)
    \le e^{\frac{1 + \varepsilon}{\sqrt{2}} \kappa_\varepsilon^{1/2} |x|^{1/2}} .
\end{gather*}
So, $f \in L_{g^{-1}}^\infty(\real^2)$, where $g(x)  = e^{a |x|^{1/2}}$ with $a = \frac{1 + \varepsilon}{\sqrt{2}} \kappa_\varepsilon^{1/2}$. Clearly, the analytic continuation of $f$ to $\comp^2$ is given by the formula
\begin{gather*}
    f(x_1 + iy_1, x_2 + iy_2) =  \cos\left(i\sqrt{x_1 + iy_1 + i\kappa_\varepsilon (x_2 + iy_2)}\right) .
\end{gather*}
Finally,  see \eqref{ej},  letting $(-\infty,0) \ni y_2 \to -\infty$, we arrive at 
\begin{align*}
    \frac{\|f(\cdot + iy_2 \mathbf{e}_2)\|_{L_{g^{-1}}^\infty(\real^2)}}{(g(y_2 \mathbf{e}_2))^{\sqrt{2}\,(1 - \varepsilon)}}
    & \ge \frac{|f(0 + iy_2 \mathbf{e}_2)|}{g(0)(g(y_2 \mathbf{e}_2))^{\sqrt{2}\,(1 - \varepsilon)}}
    = \frac{\left|\cos(i\sqrt{-\kappa_\varepsilon y_2})\right|}{e^{\sqrt{2}\,(1 - \varepsilon)\frac{1 + \varepsilon}{\sqrt{2}} \kappa_\varepsilon^{1/2} |y_2|^{1/2}}}
    \ge  \frac{e^{\kappa_\varepsilon^{1/2} |y_2|^{1/2}}}{2 e^{(1 - \varepsilon^2) \kappa_\varepsilon^{1/2} |y_2|^{1/2}}} \\
   &  =  \frac 12\,e^{\varepsilon^2 \kappa_\varepsilon^{1/2} |y_2|^{1/2}}      \xrightarrow[y_2 \to -\infty]{} \infty .
\end{align*}

\end{remark}


\section{Concluding remarks}\label{Rems}

Corollary \ref{th:3.1} shows that sub-exponentially growing solutions of $m(D)f=0$ admit analytic continuation to entire functions on $\comp^n$. It is well known that no growth restrictions are necessary in the case when $m(D)$ is an elliptic partial differential operator with constant coefficients, and every solution of $m(D)f=0$ in $\real^n$ admits analytic continuation to an entire function on $\comp^n$, see \cite{P39,E60}.

\begin{remark}
The latter result has a local version similar to Hayman's theorem on harmonic functions, see  \cite[Thm.~1]{H70}: for every elliptic partial differential operator $m(D)$ with constant coefficients there exists a constant $c_m  \in (0, 1)$ such that every solution of  $m(D)f=0$ in the ball $\{x \in \real^n : \ |x| < R\}$ of any radius $R > 0$ admits continuation to an analytic function in the ball $\{x \in \comp^n : \ |x| < c_m R\}$. Indeed, let $m_0(D) = \sum_{|\alpha| = N} a_\alpha D^\alpha$ be the principal part of $m(D) = \sum_{|\alpha| \le N} a_\alpha D^\alpha$. There exists $C_m > 0$ such that
\begin{gather*}
    \sum_{|\alpha| = N} a_\alpha (a + ib)^\alpha = 0, \quad a, b \in \real^n \quad\implies\quad |a| \ge C_m |b|,
\end{gather*}
see, e.g.\ \cite[\S7]{S59}. Then the same argument as in the proof of \cite[Cor.~8.2]{KL18} shows that $f$ admits continuation to an analytic function in the ball $\left\{x \in \comp^n : \ |x| < (1 + C_m^{-2})^{-1/2} R\right\}$. Note that in the case of the Laplacian, one can take $C_m = 1$ and $c_m = (1 + C_m^{-2})^{-1/2} = \frac{1}{\sqrt{2}}$, which is the optimal constant for harmonic functions, see \cite{H70}.
\end{remark}

Let us return to equations in $\real^n$. Below, $m(\xi)$ will always denote a polynomial with $\left\{\xi \in \real^n \mid  m(\xi) = 0\right\}  \subseteq \{0\}$. For non-elliptic partial differential operators $m(D)$, one needs to place growth restrictions on solutions of $m(D)f=0$ to make sure that they
admit analytic continuation to entire functions on $\comp^n$.

We say that a function $f$ defined on $\real^n$ (or $\comp^n$) is of \emph{infra-exponential} growth, if for every $\varepsilon > 0$, there exists $C_\varepsilon > 0$ such that
\begin{gather*}
    |f(z)| \le C_\varepsilon e^{\varepsilon |z|}
    \quad\text{for all}\quad z \in \real^n \ (z \in \comp^n).
\end{gather*}

Let $\mu :  [0, \infty) \to [0, \infty)$ be an increasing function, which increases to infinity and satisfies
\begin{gather*}
    \mu(t) \le At + B , \quad t \ge 0
\end{gather*}
for some $A, B > 0$, and
\begin{equation}\label{muBD}
    \int_1^\infty \frac{\mu(t)}{t^2}\, dt < \infty .
\end{equation}
Suppose $\left\{\xi \in \real^n \mid  m(\xi) = 0\right\}  = \{0\}$. Then, under additional restrictions on $\mu$, every solution $f$ of $m(D)f=0$ that has growth $O(e^{\varepsilon \mu(|x|)})$ for every $\varepsilon > 0$ admits analytic continuation to an entire function of infra-exponential growth on $\comp^n$, see \cite{Kan00}. It is easy to see that \eqref{muBD} is equivalent to the Beurling--Domar condition \eqref{B-D} for
 $g(x) := e^{\varepsilon \mu(|x|)}$.

One cannot replace $O(e^{\varepsilon \mu(|x|)})$ with $O(e^{\varepsilon |x|})$ in the above result without placing a restriction on the complex zeros of $m$. If there exists $\delta > 0$ such that $m(\zeta)$ has no complex zeros in
\begin{equation}\label{delta}
    |\Im \zeta| < \delta , \quad |\Re \zeta| > \delta^{-1} ,
\end{equation}
then every solution of $m(D)f=0$ that, together with its partial derivatives up to the order of $m(D)$, is of infra-exponential growth on $\real^n$, admits analytic continuation to an entire function of infra-exponential growth on $\comp^n$, see \cite{Kan98,Kan00}.

On the other hand, if for every $\delta > 0$, \eqref{delta} contains complex zeros of $m(\zeta)$, then $m(D)f=0$ has a solution in $C^\infty$ all of whose derivatives are of infra-exponential growth
 on $\real^n$, but which is not entire infra-exponential in $\comp^n$. The proof of the latter result in \cite{Kan98,Kan00} is not constructive, and the author writes: ``\emph{Unfortunately we cannot present concrete examples of such solutions}''; however, it is not difficult to construct, for any $\varepsilon > 0$, a solution in $C^\infty$ all of whose derivatives have growth $O(e^{\varepsilon |x|})$, but which is not real-analytic. Indeed, according to the assumption, there exist complex zeros
\begin{gather*}
    \zeta_k = \xi_k + i\eta_k ,
    \quad  \xi_k , \eta_k \in \real^n , \qquad k \in \nat
\end{gather*}
of $m(\zeta)$ such that
\begin{equation}\label{etak}
    |\eta_k| < k^{-1} , \quad |\xi_k| > k .
\end{equation}
Choosing a subsequence, we can assume that $\omega_k := |\xi_k|^{-1} \xi_k$ converge to a point $\omega_0 \in \Ss^{n - 1} := \{\xi \in \real^n : \, |\xi| = 1\}$ as $k \to \infty$, and that $|\omega_k - \omega_0| < 1$ for all $k \in \nat$. Then
\begin{equation}\label{cos}
    \omega_k \cdot \omega_0
    = \frac{|\omega_k|^2 + |\omega_0|^2 - |\omega_k - \omega_0|^2}{2}
    > \frac{1 + 1 - 1}{2} = \frac12, \quad k \in \nat .
\end{equation}
Consider
\begin{equation}\label{f1}
    f(x)
    := \sum_{k > \varepsilon^{-1}}  \frac{e^{i\zeta_k\cdot x}}{e^{|\xi_k|^{1/2}}}
    = \sum_{k > \varepsilon^{-1}}  \frac{e^{i\xi_k\cdot x - \eta_k\cdot x}}{e^{|\xi_k|^{1/2}}},
    \quad x \in \real^n .
\end{equation}
Then, for every multi-index $\alpha$ and  every  $x\in\real^n$,
\begin{align*}
    |\partial^\alpha f(x)|
    = \left|\sum_{k > \varepsilon^{-1}}  \frac{(i\zeta_k)^\alpha e^{i\zeta_k\cdot x}}{e^{|\xi_k|^{1/2}}}\right|
    &\le \sum_{k > \varepsilon^{-1}} \frac{(|\xi_k| + 1)^{|\alpha|} e^{|\eta_k| |x|}}{e^{|\xi_k|^{1/2}}} \\
    &\le e^{\varepsilon |x|} \sum_{k > \varepsilon^{-1}} \frac{(|\xi_k| + 1)^{|\alpha|}}{e^{|\xi_k|^{1/2}}} =: C_\alpha e^{\varepsilon |x|} ,
\end{align*}
see \eqref{etak}. Further,
\begin{gather*}
    m(D) f(x) = \sum_{k > \varepsilon^{-1}}  \frac{m(\zeta_k) e^{i\zeta_k\cdot x}}{e^{|\xi_k|^{1/2}}} = 0 .
\end{gather*}
On the other hand, $f$ is not real-analytic. Before we prove this, note that formally putting $x - i t \omega_0$, $t > 0$ in place of $x$ in the right-hand side of \eqref{f1}, one gets a divergent series. Indeed, its terms can be estimated as follows
\begin{align*}
    \left|\frac{e^{i\xi_k\cdot x + t \xi_k\cdot \omega_0 - \eta_k\cdot x + it\eta_k\cdot \omega_0}}{e^{|\xi_k|^{1/2}}}\right|
    = \frac{e^{t |\xi_k| \omega_k\cdot \omega_0 - \eta_k\cdot x}}{e^{|\xi_k|^{1/2}}}
    \ge e^{-\varepsilon |x|} \frac{e^{t |\xi_k|/2}}{e^{|\xi_k|^{1/2}}} \to \infty
\end{align*}
as $k \to \infty$, see \eqref{etak}, \eqref{cos}.

For any $j > \varepsilon^{-1}$, there exists $\ell_j \in \nat$ such that
\begin{equation}\label{ell}
    \ell_j \le |\xi_j|^{1/2} < \ell_j + 1 .
\end{equation}
It is clear that $\ell_j \to \infty$ as $j \to \infty$, see \eqref{etak}. Note that
\begin{gather*}
    |\arg \left(\omega_0\cdot\zeta_k\right)|
    \le \frac{|\omega_0\cdot\eta_k|}{|\omega_0\cdot\xi_k|} \le \frac{2}{k |\xi_k|}.
\end{gather*}
If $|\xi_k| \ge {6\ell_j}/{\pi k}$, then
\begin{gather*}
    |\arg \left(\omega_0\cdot\zeta_k\right)^{\ell_j}|
    \le \frac{2\ell_j}{k |\xi_k|} \le \frac{\pi}{3},
\intertext{and}
    \Re \left(\omega_0\cdot\zeta_k\right)^{\ell_j}
    \ge \frac12 \left|\omega_0\cdot\zeta_k\right|^{\ell_j}
    \ge \frac1{2^{\ell_j + 1}} |\xi_k|^{\ell_j} .
\end{gather*}
Clearly, $|\xi_j| \ge \frac{6\ell_j}{\pi j}$ for sufficiently large $j$, see \eqref{ell}. Hence, one has the following estimate for the directional derivative $\partial_{\omega_0}$
\begin{align*}
    \left|\left((-i\partial_{\omega_0})^{\ell_j}f\right)(0)\right|
    &\ge \sum_{k > \varepsilon^{-1}}  \frac{\Re \left(\omega_0\cdot\zeta_k\right)^{\ell_j}}{e^{|\xi_k|^{1/2}}} \\
    &\ge  -\sum_{k > \varepsilon^{-1}, \ |\xi_k| < \frac{6\ell_j}{\pi k}}  \frac{|\zeta_k|^{\ell_j}}{e^{|\xi_k|^{1/2}}}
            + \sum_{k > \varepsilon^{-1}, \ |\xi_k| \ge \frac{6\ell_j}{\pi k}} \frac{|\xi_k|^{\ell_j}}{2^{\ell_j + 1}e^{|\xi_k|^{1/2}}} \\
    &\ge  -\sum_{k > \varepsilon^{-1}, \ |\xi_k| < \frac{6\ell_j}{\pi k}}  \frac{\left(|\xi_k| + \frac1k\right)^{\ell_j}}{e^{|\xi_k|^{1/2}}}
            + \frac{|\xi_j|^{\ell_j}}{2^{\ell_j + 1}e^{|\xi_j|^{1/2}}} \\
    &\ge  -\sum_{k > \varepsilon^{-1}, \ |\xi_k| < \frac{6\ell_j}{\pi k}}  \frac{1}{e^{|\xi_k|^{1/2}}}\left(\frac{10\ell_j}{\pi k}\right)^{\ell_j}
            + \frac{\ell_j^{2\ell_j}}{2^{\ell_j + 1}e^{(\ell_j^2 + 1)^{1/2}}} \\
    &\ge  - (10\ell_j)^{\ell_j}\sum_{k = 1}^\infty  \frac{1}{e^{|\xi_k|^{1/2}} k^2}\
            + \frac{\ell_j^{2\ell_j}}{2^{\ell_j + 1}e^{\ell_j + 1}}\\
    &= - C(10\ell_j)^{\ell_j} + (2e)^{-(\ell_j + 1)} \ell_j^{2\ell_j} .
\end{align*}
Hence,
\begin{gather*}
    \left|\left((-i\partial_{\omega_0})^{\ell_j}f\right)(0)\right|
    \ge \ell_j^{\frac{3}{2}\,\ell_j}
\end{gather*}
for all sufficiently large $j$, which means that $f$ is not real-analytic in a neighbourhood of $0$.

The operator $m(D)$ in the previous example is not hypoelliptic. If $m(D)$ is hypoelliptic, then every solution of $m(D) f = 0$, such that $|f(x)| \le A e^{a|x|}$, $x \in \real^n$, for some constants $A, a > 0$, admits analytic continuation to an entire function of order one on $\comp^n$, see \cite[\S 4, Cor.~2]{G66}. For elliptic operators, this result can be strengthened: every solution of $m(D) f = 0$, such that $|f(x)| \le A e^{a|x|^\beta}$, $x \in \real^n$, for $\beta \ge 1$ and some constants $A, a > 0$, admits analytic continuation to an entire function of order $\beta$ on $\comp^n$, see \cite[\S 4, Cor.~3]{G66}. Let us show that for every $\beta > 1$ there exists a semi-elliptic operator $m(D)$, see \cite[Thm.~11.1.11]{H83_2}, and a $C^\infty$ solution of $m(D) f = 0$, all of whose derivatives have growth $O(e^{a|x|^\beta})$, but which does not admit analytic continuation to an entire function on $\comp^n$.

A simple example of such a semi-elliptic operator is $\partial_{x_1}^2 + \partial_{x_2}^{4\ell + 2}$ with $\ell \in \nat$ satisfying $1 + \frac{1}{2\ell} \le \beta$, i.e.\ $\ell \ge \frac{1}{2(\beta - 1)}$.

Let
\begin{gather*}
    f(x_1, x_2)
    := \sum_{k = 1}^\infty \frac{e^{-ik^{2\ell + 1}x_1 + kx_2}}{e^{k^{2\ell + 1}}},
    \quad (x_1, x_2) \in \real^2 .
\end{gather*}
If $x_2 > 0$, then the function $t \mapsto tx_2 - t^{2\ell + 1}$ achieves a maximum at $t = \left(\frac{x_2}{2\ell + 1}\right)^{\frac{1}{2\ell}}$, and this maximum is equal to
\begin{gather*}
    2\ell \left(\frac{1}{2\ell + 1}\right)^{1 + \frac{1}{2\ell}}x_2^{1 + \frac{1}{2\ell}}
    =: c_\ell\, x_2^{1 + \frac{1}{2\ell}} .
\end{gather*}
Hence, for every multi-index $\alpha$,
\begin{align*}
    |\partial^\alpha f(x_1, x_2)|
    &\le \sum_{k = 1}^\infty k^{(2\ell + 1)|\alpha|} e^{kx_2 - k^{2\ell + 1}} \\
    &= \sum_{k = 1}^{\left[x_2^{\frac{1}{2\ell}}\right] + 1} k^{(2\ell + 1)|\alpha|} e^{kx_2 - k^{2\ell + 1}}
        + \sum_{k = {\left[x_2^{\frac{1}{2\ell}}\right]} + 2}^\infty k^{(2\ell + 1)|\alpha|} e^{k\left(x_2 - k^{2\ell}\right)} \\
    &\le \left(\left[x_2^{\frac{1}{2\ell}}\right] + 1\right)^{(2\ell + 1)|\alpha| + 1}  e^{c_\ell\, x_2^{1 + \frac{1}{2\ell}}}
        + \sum_{k = 1}^\infty k^{(2\ell + 1)|\alpha|} e^{-k} \\
    &\le 2^{(2\ell + 1)|\alpha| + 1}\left(x_2^{2|\alpha| +1} + 1\right) e^{c_\ell\, x_2^{1 + \frac{1}{2\ell}}}
        + c_{\ell, \alpha}\\
    &\le C_{\ell, \alpha} e^{(c_\ell + 1) x_2^{1 + \frac{1}{2\ell}}}.
\end{align*}
If $x_2 \le 0$, then
\begin{gather*}
    |\partial^\alpha f(x_1, x_2)|
    \le \sum_{k = 1}^\infty \frac{k^{(2\ell + 1)|\alpha|}}{e^{k^{2\ell + 1}}}
    < \sum_{j = 1}^\infty \frac{j^{|\alpha|}}{e^{j}}
    =: C_\alpha < \infty .
\end{gather*}
So, $f \in C^\infty(\real^2)$, and
$\partial^\alpha f(x_1, x_2) = O\left(e^{(c_\ell + 1) |x_2|^{1 + \frac{1}{2\ell}}}\right) = O\left(e^{(c_\ell + 1) |x|^{1 + \frac{1}{2\ell}}}\right)$.
It is easy to see that $\left(\partial_{x_1}^2 + \partial_{x_2}^{4\ell + 2}\right)f(x_1, x_2) = 0$.

The function $f$ admits analytic continuation to the set
\begin{gather*}
    \Pi_1 := \left\{(z_1, z_2) \in \comp^2 | \ \Im z_1 < 1\right\} .
\end{gather*}
Indeed, let
\begin{align*}
    f(z_1, z_2)
    = f(x_1 + iy_1, x_2 + iy_2)
    &= \sum_{k = 1}^\infty \frac{e^{-ik^{2\ell + 1}(x_1 + iy_1) + k(x_2 + iy_2)}}{e^{k^{2\ell + 1}}} \\
    &= \sum_{k = 1}^\infty e^{i\left(k y_2 - k^{2\ell + 1}x_1\right)} e^{k^{2\ell + 1}(y_1 - 1) + kx_2} .
\end{align*}
It is easy to see that the last series is uniformly convergent on compact subsets of $\Pi_1$. So, $f$ admits analytic continuation to $\Pi_1$. On the other hand, $f(iy_1, 0) \to \infty$ as $y_1 \to 1 - 0$. Indeed,
\begin{gather*}
    f(iy_1, 0) = \sum_{k = 1}^\infty  e^{k^{2\ell + 1}(y_1 - 1)} .
\end{gather*}
Take any $N \in \nat$. If $y_1 > 1 - N^{-(2\ell + 1)}$, then
\begin{gather*}
    f(iy_1, 0)
    >  \sum_{k = 1}^\infty  e^{-k^{2\ell + 1}N^{-(2\ell + 1)}}
    >  \sum_{k = 1}^N  e^{-k^{2\ell + 1}N^{-(2\ell + 1)}}
    \ge \sum_{k = 1}^N  e^{-1}
    = \frac{N}{e} .
\end{gather*}
So, $f(iy_1, 0) \to \infty$ as $y_1 \to 1 - 0$.


\begin{ack}
    We are grateful for the careful comments of an expert referee, who helped us to improve the exposition of this paper. Financial support for the first two authors through the DFG-NCN Beethoven Classic 3 project SCHI419/11-1 \& NCN 2018/31/G/ST1/02252, the  6G-life project (BMBF programme ``Souver\"an. Digital. Vernetzt.'' 16KISK001K) and the SCADS.AI centre is gratefully acknowledged.
\end{ack}


\frenchspacing

\frenchspacing

\end{document}